\newcommand{\mz}{\mathbb{Z}}
\newcommand{\mr}{\mathbb{R}}
\newcommand{\mn}{\mathbb{N}}
\newcommand{\mc}{\mathbb{C}}
\newcommand{\del}{\partial}
\newcommand{\twid}{{\raise.17ex\hbox{$\scriptstyle\sim$}}}
\DeclareMathOperator{\im}{Im}
\DeclareMathOperator{\hfs}{hfs}
\DeclareMathOperator{\mvr}{V_\mr}
\newcommand*{\email}[1]{\href{mailto:#1}{\nolinkurl{#1}}}
\newcommand{\sL}{{\mathcal L}}
\newcommand\bC{{\mathbb C}}
\newcommand\bP{{\mathbb P}}
\newcommand\bR{{\mathbb R}}
\DeclareMathOperator*{\rank}{\rm rank}
\newtheorem{theorem}{Theorem}[section]
\newtheorem{corollary}[theorem]{Corollary}
\newtheorem{proposition}[theorem]{Proposition}
\newtheorem{lemma}[theorem]{Lemma}
\theoremstyle{definition}
\newtheorem{definition}[theorem]{Definition}
\theoremstyle{remark}
\newtheorem{rem}[theorem]{Remark}
\newtheorem{example}[theorem]{Example}
\newcommand{\done}{\hfill $\triangleleft$}
\newcommand{\mypoints}{1.634829e-01/1.763406e-01,-2.344660e-01/2.051455e-01,-9.479990e-01/2.161792e-01,
-3.884802e-01/3.037904e-01,2.703894e-01/3.047601e-01,-8.758754e-01/3.085825e-01,
-4.978205e-01/3.527786e-01,-5.883462e-01/3.774844e-01,-7.147397e-01/3.817408e-01,
3.854243e-01/4.536600e-01,-6.091223e-01/-3.808246e-01,-7.665967e-01/-3.703569e-01,
-4.978205e-01/-3.527786e-01,-8.622812e-01/-3.199967e-01,2.703894e-01/-3.047601e-01,
-3.884802e-01/-3.037904e-01,-9.293902e-01/-2.469623e-01,-2.344660e-01/-2.051455e-01,
1.634829e-01/-1.763406e-01,-1.505363e-01/-1.387438e-01,-9.883641e-01/-1.066145e-01,
8.635207e-02/-9.000323e-02,0.0/0.0,-1.0e00/0.0,
-9.883641e-01/1.066145e-01,-1.505363e-01/1.387438e-01,-8.007185e-01/3.574484e-01
}
\newcommand{\cechcomplex}[1]{%
\begin{tikzpicture}[line cap=round,line join=round,>=triangle
  45,x=1cm,y=1cm,scale=2.0]
\foreach \x/\y in \mypoints \fill [color=cyan!60] (\x,\y) circle (#1cm);
\foreach \xa/\ya in \mypoints
\foreach \xb/\yb in \mypoints
{
  \pgfmathtruncatemacro{\z}{(\xa-\xb)^2+(\ya-\yb)^2<(#1)^2*4}
  \ifnum \z = 1
  \draw[color=black!80] (\xa,\ya) -- (\xb,\yb);
  \fi
}
\foreach \x/\y in \mypoints \fill [color=blue] (\x,\y) circle (0.04cm);
\end{tikzpicture}%
}
\newcommand{\cechcomplextwo}[1]{%
\begin{tikzpicture}[line cap=round,line join=round,>=triangle
  45,x=1cm,y=1cm,scale=2.0]
\foreach \xa/\ya in \mypoints
\foreach \xb/\yb in \mypoints
\foreach \xc/\yc in \mypoints
{
  \pgfmathtruncatemacro{\z}{(\xa-\xb)^2+(\ya-\yb)^2<(#1)^2*4}
  \pgfmathtruncatemacro{\w}{(\xa-\xc)^2+(\ya-\yc)^2<(#1)^2*4}
  \pgfmathtruncatemacro{\v}{(\xb-\xc)^2+(\yb-\yc)^2<(#1)^2*4}
  \ifnum \z = 1
  \ifnum \w = 1
  \ifnum \v = 1
  \draw[fill=cyan!25] (\xa,\ya) -- (\xb,\yb) -- (\xc,\yc) -- cycle;
  \fi
  \fi
  \fi
}
\foreach \xa/\ya in \mypoints
\foreach \xb/\yb in \mypoints
{
  \pgfmathtruncatemacro{\z}{(\xa-\xb)^2+(\ya-\yb)^2<(#1)^2*4}
  \ifnum \z = 1
  \draw[color=black!80] (\xa,\ya) -- (\xb,\yb);
  \fi
}
\foreach \x/\y in \mypoints \fill [color=red!80] (\x,\y) circle (0.04cm);
\end{tikzpicture}%
}
\newcommand{\onesimplexpoints}{0.0/0.0,1.0/0.0}
\newcommand{\onesimplex}[0]{%
\begin{tikzpicture}[line cap=round,line join=round,>=triangle
  45,x=1cm,y=1cm,scale=2.0]
\foreach \xa/\ya in \onesimplexpoints
\foreach \xb/\yb in \onesimplexpoints
{
  \pgfmathtruncatemacro{\z}{(\xa-\xb)^2+(\ya-\yb)^2>0}
  \ifnum \z = 1
  \draw[color=black!80] (\xa,\ya) -- (\xb,\yb);
  \fi
}

\foreach \x/\y in \onesimplexpoints \fill [color=red!80] (\x,\y) circle (0.04cm);
\end{tikzpicture}%
}
\newcommand{\twosimplexpoints}{0.0/0.0,1.0/0.0,0.5/1.0}
\newcommand{\twosimplex}[0]{%
\begin{tikzpicture}[line cap=round,line join=round,>=triangle
  45,x=1cm,y=1cm,scale=2.0]
\foreach \xa/\ya in \twosimplexpoints
\foreach \xb/\yb in \twosimplexpoints
{
  \pgfmathtruncatemacro{\z}{(\xa-\xb)^2+(\ya-\yb)^2>0}
  \ifnum \z = 1
  \draw[color=black!80] (\xa,\ya) -- (\xb,\yb);
  \fi
}

\foreach \xa/\ya in \twosimplexpoints
\foreach \xb/\yb in \twosimplexpoints
\foreach \xc/\yc in \twosimplexpoints
{
  \pgfmathtruncatemacro{\z}{(\xa-\xb)^2+(\ya-\yb)^2>0}
  \pgfmathtruncatemacro{\w}{(\xa-\xc)^2+(\ya-\yc)^2>0}
  \pgfmathtruncatemacro{\v}{(\xb-\xc)^2+(\yb-\yc)^2>0}
  \ifnum \z = 1
  \ifnum \w = 1
  \ifnum \v = 1
  \draw[fill=cyan!25] (\xa,\ya) -- (\xb,\yb) -- (\xc,\yc) -- cycle;
  \fi
  \fi
  \fi
}
\foreach \x/\y in \twosimplexpoints \fill [color=red] (\x,\y) circle (0.04cm);
\end{tikzpicture}%
} 
\newcommand{\threesimplexpoints}{0.0/0.0/0.0,1.0/0.0/0.0,0.5/1.0/0.0,0.5/0.5/1}
\newcommand{\threesimplex}[0]{%
\begin{tikzpicture}[line cap=round,line join=round,>=triangle
  45,x=1cm,y=1cm,scale=2.0]
\foreach \xa/\ya/\za in \threesimplexpoints
\foreach \xb/\yb/\zb in \threesimplexpoints
{
  \pgfmathtruncatemacro{\z}{(\xa-\xb)^2+(\ya-\yb)^2+(\za-\zb)^2>0}
  \ifnum \z = 1
  \draw[color=black!80] (\xa,\ya) -- (\xb,\yb);
  \fi
}

\foreach \xa/\ya/\za in \threesimplexpoints
\foreach \xb/\yb/\zb in \threesimplexpoints
\foreach \xc/\yc/\zc in \threesimplexpoints
{
  \pgfmathtruncatemacro{\z}{(\xa-\xb)^2+(\ya-\yb)^2+(\za-\zb)^2>0}
  \pgfmathtruncatemacro{\w}{(\xa-\xc)^2+(\ya-\yc)^2+(\za-\zc)^2>0}
  \pgfmathtruncatemacro{\v}{(\xb-\xc)^2+(\yb-\yc)^2+(\zb-\zc)^2>0}
  \ifnum \z = 1
  \ifnum \w = 1
  \ifnum \v = 1
  \draw[fill=cyan!25] (\xa,\ya,\za) -- (\xb,\yb,\zb) -- (\xc,\yc,\zc) -- cycle;
  \fi
  \fi
  \fi
}
\foreach \x/\y/\z in \threesimplexpoints \fill [color=red] (\x,\y,\z) circle (0.04cm);
\end{tikzpicture}%
}
\begin{document}
	\title{Sampling real algebraic varieties for topological data analysis}
	
	\author{
		Emilie Dufresne\thanks{School of Mathematical Sciences, University of Nottingham, University Park, Nottingham, NG7 2RD (\email{emilie.dufresne@nottingham.ac.uk}).} \and
		Parker B. Edwards\thanks{Department of Mathematics, PO Box 118105, University of Florida, Gainesville, FL 32611 (\email{pedwards@ufl.edu})} \and
		Heather A. Harrington\thanks{Mathematical Institute, Radcliffe Observatory Quarter, Woodstock Road, Oxford OX2 6GG, United Kingdom (\email{harrington@maths.ox.ac.uk})}  \and
		Jonathan D. Hauenstein\thanks{Department of Applied and Computational Mathematics and Statistics,
			University of Notre Dame, Notre Dame, IN 46556 (\email{hauenstein@nd.edu}, \url{www.nd.edu/\~jhauenst}).}
	}
	
	\maketitle


	\begin{abstract}
		\noindent 
		Topological data analysis (TDA) provides a growing body of tools for computing geometric and topological information about spaces from a finite sample of points.  We present a new adaptive algorithm for finding provably dense samples of points on real algebraic varieties given a set of defining polynomials. The algorithm utilizes methods from numerical algebraic geometry to give formal guarantees about the density of the sampling and it also employs geometric heuristics to reduce the size of the sample. As TDA methods consume significant computational resources that scale poorly in the number of sample points, our sampling minimization makes applying TDA methods more feasible. We provide a software package that implements the algorithm and also demonstrate the implementation with several examples. \\
		\noindent {\bf Keywords}. 
		Topological data analysis, real algebraic varieties,
		dense samples, numerical algebraic geometry, minimal distance
	\end{abstract}
	

	\section{Introduction}\label{Sec:Intro}

	Understanding the geometry and topology of real algebraic varieties is a ubiquitous and challenging problem in applications modelled by polynomial systems. For kinematics problems, geometric insight about configuration spaces can lead to physical insight about the system being modelled (e.g., \cite{cyclotop}), while the geometry of varieties can encode information about the dynamics of biochemical systems (e.g., \cite{Gross2016}). In this paper, we present a new algorithm fulfilling a key step in applying topological data analysis methods (TDA), particularly persistent homology \cite{zcompute}, to real algebraic varieties. We aim to provide and demonstrate a computationally feasible pipeline for applying TDA to real algebraic varieties while maintaining theoretical guarantees.
	
	The most closely related problem to computing persistent homology (PH) of real algebraic varieties is the computation of its Betti numbers. There are two main approaches to this problem: symbolic methods which process polynomial equations directly, and surface reconstruction methods which estimate Betti numbers by constructing spaces from point~samples. 
	
	The complexity of symbolically computing Betti numbers of (complex) projective varieties has been studied \cite{scheiblechner2007complexity} as well as numerically stable homology computations of real projective varieties~\cite{cucker2016computing} and real semialgebraic sets \cite{Basu2006Betti}. 
	Using random sampling of manifolds, 
	an algorithm is provided in \cite{Niyogi2008} for ``learning''
	the homology with complexity bounds in terms of a condition number relating to the curvature and closeness to self-intersections.
	Alternatively, one can intersect a projective variety with a general linear space and obtain points. For a large enough set of ``general'' points obtained this way, \cite{mustactua1998graded} studies the Betti diagram, which provides information about the projective variety. Although an algorithm for obtaining such general points is not given in that paper, the recent paper \cite{RealSample} provides a uniform sampling method for algebraic manifolds using slicing. For a given set of general points, recent work has ``learned" the equations defining the algebraic variety and confirmed the expected homology via persistent homology~computations \cite{breiding2018learning}.
	
	Extensive effort has also produced a large number of surface reconstruction algorithms, particularly for nonsingular surfaces embedded in $\mr^3$. The survey \cite{SurfaceReconstructionSurvey} and articles \cite{AmentaSampling, DeyReconstruction, CycloReconstruction, IncrementalReconstruction} provide a representative, though by no means exhaustive, list of examples. The general format of surface reconstruction algorithms is to take as input a point cloud sampled from an underlying surface or space, and output a simplicial complex (or richer structure) which geometrically estimates the underlying space. Betti numbers computed from the simplicial complex serve as estimates of the numbers for the underlying space. The inherent limitations on the underlying space required by these methods exclude their use as general tools for analyzing real algebraic varieties. Indeed, all of the methods listed above either provide no theoretical guarantees on the correctness of the reconstruction, or require that the underlying space is some combination of nonsingular, embedded in $\mr^3$, and/or intrinsically 2-dimensional. 
	
	We focus on using the persistent homology pipeline to analyze real varieties. Like surface reconstruction methods, the pipeline accepts as input a finite set of points sampled from a space and outputs estimated homological information about the space. More precisely, suppose that we are given defining polynomials for a pure dimensional algebraic set $V\subseteq \mc^N$. The compact set~$V_\mr$ resulting from intersecting $V$
	with a hypercube in $\mr^N$ is the one we wish to sample and analyze. PH captures richer homological information than just Betti numbers, and the theoretical guarantees of PH apply to potentially singular varieties embedded in any dimension. As a trade off, the computational resources required to compute PH quickly become large when more points are added to the sample (e.g., \cite{roadmap}). Both the theoretical framework for the PH pipeline and its computational costs drive the requirements of a suitable sampling algorithm. Among existing sampling approaches, subdivision and reduction sampling methods \cite{SherbrookeSubdivision,MourrainSubdivision} are the most obvious candidates. In their most general format, these methods can take the polynomials defining a real semialgebraic set as input and output a dense sample of points. For PH computations, they exhibit two drawbacks: 
	
	\begin{enumerate}
		\item Sample points in the output need not be especially close to the underlying variety. Input samples with points close to the underlying variety significantly improve the accuracy of PH results.
		\item Adjusting current implementations to reduce the number of sample points in the output is not straightforward. Computational resource requirements for PH scale up quickly with more sample points.
	\end{enumerate}
	
	Our alternative approach for sampling varieties is based on numerical algebraic geometry, with the books \cite{bertinibook,somnag}
	providing a general overview. The algorithm addresses the first point above by constructing provably dense samples with points very close to the underlying variety. The theoretical version of the algorithm can be readily adjusted to incorporate geometric heuristics which significantly reduce the number of points in the final output thereby addressing the second point.
	
	The remainder of the paper is organized as follows: In \Cref{Sec:TDA}, we recall the TDA theory and computational considerations which determine our sampling algorithm's requirements. In \Cref{Sec:Sampling}, we explain the tools from numerical algebraic geometry used in our approach. \Cref{Sec:Generate} details the sampling algorithm, proves its correctness, and discusses the geometric heuristics for sample minimization. Finally, we present examples in \Cref{Sec:Examples} to illustrate how our sampling algorithm can be used in conjunction with TDA to calculate topological information 
	for several~varieties.

	
	\section{Topological data analysis}\label{Sec:TDA}
	
	Topological data analysis is a very active field of research broadly encompassing theory and algorithms which adapt the theoretical tools of topology and geometry to analyze the ``shape'' of data. We concentrate on applying the ``persistent homology pipeline'' popularized by Carlsson in \cite{carltop} and summarized by Ghrist in \cite{ghristtop}. Broader overviews of other TDA methods can be found in the articles \cite{ChazalSurvey,roadmap} and textbooks \cite{HarerBook,OudotBook}. The PH pipeline follows these steps: 
	\begin{enumerate}
		\item Input data is expected to be in the form of a \emph{point cloud} consisting of finitely many points in $\mr^n$ together with their pairwise distances. 
		\item A collection of shapes, simplicial complexes, are constructed out of the input data. The complexes encode the shape of the data at different distance scales. 
		\item Algebraic topological features of the simplicial complexes produced in Step 2 are calculated, compared, and ultimately assembled into a single output summary using the algebraic theory of \emph{persistent homology}. 
	\end{enumerate}
	
	In this section, we briefly recall simplicial complexes and the basics of homology (the textbook~\cite{hatcher} provides detailed information on homology theory). We then summarize the theoretical and computational elements in each step of the PH pipeline that pertain to applying the pipeline to real algebraic varieties.
	
	\subsection{Homology groups}
	
	Algebraic topology studies methods for assigning algebraic structures to topological spaces in such a way that the algebraic structures encode topological information about the space. For a natural number $p\geq 0$, the $p$-th homology group of a space captures information about the number of $p$-dimensional holes in the space. We initially restrict focus to spaces called simplicial complexes which are more amenable to computation. 
	
	\begin{definition} An (abstract) \emph{simplicial complex} is a finite set $\Omega$ of non-empty subsets of~$\mn$ such that $\omega\in \Omega$ implies that every subset of $\omega$ is an element in $\Omega$. If $\Omega$ is an abstract simplicial complex, the elements of the set $\Omega_0 = \cup_{\omega\in \Omega} \omega$ are called the \emph{vertices} of $\Omega$. The \emph{dimension} of $\Omega$ is one less than the size of the largest set in $\Omega$. For simplicial complexes $\Omega$ and $\Omega'$, a simplicial map from $\Omega$ to $\Omega'$ is a map $f:\Omega_0 \to \Omega'_0$ where $\omega\in \Omega$ implies $f(\omega)\in \Omega'$. 
	\end{definition}
	
	This purely combinatorial definition corresponds geometrically to forming spaces by gluing together points, lines, triangles, tetrahedra, and higher dimensional equivalents. Note that a simplicial complex $\Omega$ defines a subspace $|\Omega|$ of some Euclidean space ($|\Omega|$ is a \emph{geometric realization} of~$\Omega$). The vertices $\Omega_0$ correspond to geometric vertices in $|\Omega|$, the 2-element subsets in $\Omega$ to lines, the $3$-element subsets to triangles, etc., as shown in \cref{simplexfig}. The homology groups (with $\mz/2$ coefficients) for the complex $\Omega$ are built in 3 steps.

	\begin{figure}[h]
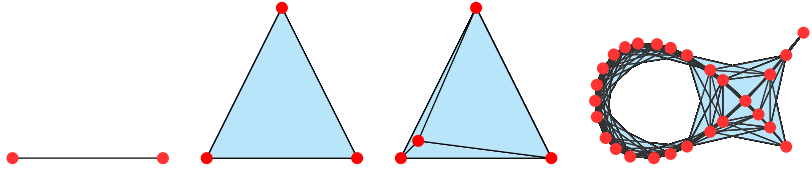

		\centering
		\begin{tabular}{c c c c}
			\onesimplex &
			\twosimplex & 
			\threesimplex & 
			\cechcomplextwo{0.30}
		\end{tabular}
		\caption{\label{simplexfig}
			From left to right: Geometric realizations of a 1-simplex, 2-simplex, 3-simplex (the interior of the tetrahedron is included), and a general simplicial complex.
		}
	\end{figure}
	
	First, encode the complex $\Omega$ algebraically. 
	
	\begin{definition}
		Take $\Omega_p \subseteq \Omega$ for an integer $p\geq 0$ to contain the sets in $\Omega$ with size $p+1$. Members of $\Omega_p$ are \emph{p-simplices} of $\Omega$. The $p$-th chain group of $\Omega$, denoted $C_p(\Omega)$, is the group of formal sums $\sum_{k=1}^n b_k \omega_k$ where $\omega_k\in \Omega_p$ and $b_k\in\mz/2$ for all $k$. Set $C_{-1} = 0$. 
	\end{definition} 
	
	Next, define a simplicial analog to the geometric operation of taking the boundary of a~space. 
	
	\begin{definition} 
		The $p$-th boundary operator is a homomorphism $\del_p: C_{p}(\Omega)\to C_{p-1}(\Omega)$ for each $p\geq 0$. Define $\del_0$ to be the zero map $\del_0: C_0(\Omega) \to 0$. For $p > 0$ and any basis element $\omega \in C_p(\Omega)$, define $\del_p(\omega) = \sum_{ \{\omega' \in \Omega_{p-1}\mid \omega' \subseteq \omega \}} \omega'$. Extending the function linearly from its action on the basis elements of $C_p(\Omega)$ defines $\del_p$ on the entirety of $C_p(\Omega)$. Elements in the kernel of $\del_p$ are called \emph{cycles} and the kernel is denoted $\ker(\del_p) = Z_p(\Omega)$. Elements in the image of $\del_{p+1}$ are called \emph{boundaries} and this group is denoted $\im(\del_{p+1}) = B_p(\Omega)$. 
	\end{definition} 
	
	Finally, capture algebraically, for all dimensions, the geometric intuition that a 1-dimen\-sional loop has no boundary, and encloses a void only if it is not the boundary of a 2-dimensional region. 
	
	\begin{definition} 
		It can be shown that $\del_{p} \circ \del_{p+1} = 0$ for all $p \geq 0$, so that $B_p \subseteq Z_p$. The \emph{$p$-th homology group} of $\Omega$, $H_p(\Omega)$, is the quotient group $Z_p(\Omega)/B_p(\Omega)$. $H_p(\Omega)$ is a finite dimensional vector space, and its rank is called the \emph{$p$-th Betti number} of $\Omega$, $\beta_p(\Omega)$.
	\end{definition}
	
	The elements of the homology groups informally represent loops and higher dimensional equivalents in a space, with $\beta_p(\Omega)$ counting the number of $p$-dimensional holes. A basis element of $H_0(\Omega)$ for a complex $\Omega$ represents a single connected component of $|\Omega|$, a basis element of $H_1(\Omega)$ represents a set of loops which can all be deformed within the space $|\Omega|$ into a loop which encloses the same 2-dimensional void, and basis elements of $H_2(\Omega)$ account for 3-dimensional voids. 
	
	Homology groups behave nicely with respect to simplicial maps, a property called \emph{functoriality}. Consider simplicial complexes $\Omega$ and $\Omega'$. Functoriality implies that for any simplicial map $f:\Omega\to \Omega'$ and $p\geq 0$ there exists an $\mz/2$-linear map $H_p(f):H_p(\Omega)\to H_p(\Omega')$. If $\Omega''$ is a third simplicial complex and $g:\Omega'\to \Omega''$ another simplicial map, then $$H_p(g\circ f) = H_p(g)\circ H_p(f).$$
	
	We can construct homology groups from topological spaces directly without using simplicial complexes. This more general \emph{singular homology} construction ($H_p^\text{sing}$) implies the existence of a $\mz/2$-linear map $H_p^{\text{sing}}(X) \to H_p^{\text{sing}}(Y)$ induced by any continuous function $f:X\to Y$. $H_p^{\text{sing}}$~also retains nice behavior with respect to composition of continuous functions. Standard results in algebraic topology show that these two different notions of homology agree where they are both defined. We will not distinguish between singular and simplicial homology subsequently.
	
	\subsection{Building simplicial complexes from data}
	
	Let \mbox{$\mathcal{X} \subseteq \mr^n$} 
	be a finite point cloud serving as input to the PH pipeline. A natural way to estimate shape information from $\mathcal{X}$ is to build a simplicial complex from $\mathcal{X}$ based on the distances between its points.
	
	\begin{definition}\label{complexes} Let $\mathcal{X}$ be a finite subset of a metric space $Y$ and $\epsilon$ be a real number. The \emph{\u{C}ech complex} for $\mathcal{X}$ with parameter $\epsilon$, $C_\epsilon(\mathcal{X})$, is a simplicial complex such that: 
		\begin{itemize}
			\item If $\epsilon < 0$, $C_\epsilon(\mathcal{X})$ is defined directly to be $\emptyset$.
			\item The vertex set of $C_\epsilon(\mathcal{X})$ is $\mathcal{X}$. 
			\item A set $x\subseteq \mathcal{X}$ belongs to $C_\epsilon(\mathcal{X})$ if there exists a point $y\in Y$ such that distance between $y$ and any point in $X$ is at most $\epsilon$. 
		\end{itemize}
		
		The \emph{Vietoris-Rips complex} for $\mathcal{X}$ with parameter $\epsilon$, denoted $R_\epsilon(\mathcal{X})$, is a simplicial complex that fulfills an alternative version of condition 3 above:
		
		\begin{enumerate}
			\item[*] A set $x\subseteq \mathcal{X}$ belongs to $R_\epsilon(\mathcal{X})$ if the distance between any two points in $x$ is at most $\epsilon$.
		\end{enumerate}

	\end{definition} 
	
	The \u{C}ech complex is closely connected with the 
	geometric operation of ``thickening" a finite point 
	cloud $\mathcal{X} \subseteq\mr^n$. Given a parameter $\epsilon\geq 0$ we 
	can replace the space $\mathcal{X}$ with the union 
	of closed balls of radius $\epsilon$ in $\mr^n$ 
	centered at points in $\mathcal{X}$. The Nerve Theorem (see e.g. \S{4G.3} 
	\cite{hatcher}) guarantees that the singular homology 
	of this version of $\mathcal{X}$ which has been thickened by 
	$\epsilon$ is isomorphic to the simplicial homology 
	of $C_\epsilon(\mathcal{X})$. Calculating \u{C}ech complexes for reasonably sized point clouds presents computational issues, as a large number of intersections of balls around points must be checked. Vietoris-Rips complexes estimate \u{C}ech complexes, and can be constructed more easily since only distances between pairs of points must be checked. See \S{3.2} of \cite{HarerBook} for more computational details about constructing these complexes. The following interleaving result precisely describes the manner in which the Vietoris-Rips complexes estimate \u{C}ech complexes.
	
	\begin{theorem}[de Silva and Ghrist \cite{coveragetop}]\label{interthm}
		If $\mathcal{X}$ is a finite set of points in $\mr^n$ and $\epsilon > 0$ there is a chain of inclusions
		$$ C_{\frac{\epsilon'}{2}}(\mathcal{X}) \subseteq R_{\epsilon '}(\mathcal{X}) \subseteq C_\epsilon(\mathcal{X}) \subseteq R_{2\epsilon}(\mathcal{X})$$
		whenever $\frac{\epsilon}{\epsilon'} \geq \frac{1}{2}\sqrt{\frac{2n}{n+1}}$.
	\end{theorem}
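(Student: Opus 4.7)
The plan is to establish the three inclusions separately. The first and third inclusions, $C_{\epsilon'/2}(\mathcal{X}) \subseteq R_{\epsilon'}(\mathcal{X})$ and $C_\epsilon(\mathcal{X}) \subseteq R_{2\epsilon}(\mathcal{X})$, follow directly from the triangle inequality. For the first, if $x \subseteq \mathcal{X}$ is a simplex of $C_{\epsilon'/2}(\mathcal{X})$, there is a point $y \in \mathbb{R}^n$ with $\|y - p\| \leq \epsilon'/2$ for every $p \in x$; for any two vertices $p, q \in x$ this gives $\|p - q\| \leq \epsilon'$, so $x$ is a simplex of $R_{\epsilon'}(\mathcal{X})$. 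The same argument (with $\epsilon'/2$ replaced by $\epsilon$) yields $C_\epsilon(\mathcal{X}) \subseteq R_{2\epsilon}(\mathcal{X})$. These steps are routine.

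The substantive inclusion is the middle one, $R_{\epsilon'}(\mathcal{X}) \subseteq C_\epsilon(\mathcal{X})$. Here we must show that whenever a finite set $x \subseteq \mathbb{R}^n$ has pairwise distances bounded by $\epsilon'$, there exists a single point $y \in \mathbb{R}^n$ within distance $\epsilon$ of every point of $x$. The tool I would invoke is \emph{Jung's theorem}, which states that any bounded set in $\mathbb{R}^n$ of diameter $d$ is contained in a closed ball of radius at most $d\sqrt{n/(2(n{+}1))}$. Applying this to $x$, whose diameter is at most $\epsilon'$, produces a point $y$ such that $\|y - p\| \leq \epsilon'\sqrt{n/(2(n{+}1))} = (\epsilon'/2)\sqrt{2n/(n{+}1)}$ for every $p \in x$. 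The hypothesis $\epsilon/\epsilon' \geq \tfrac12\sqrt{2n/(n{+}1)}$ is exactly the condition needed to upgrade this bound to $\|y - p\| \leq \epsilon$, which by definition certifies $x$ as a simplex of $C_\epsilon(\mathcal{X})$.

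The main obstacle is identifying that the constant $\tfrac12\sqrt{2n/(n{+}1)}$ appearing in the hypothesis is precisely Jung's constant for $\mathbb{R}^n$ (up to the factor of $1/2$ that converts between the radius and the parameter convention used in the \v{C}ech complex). Once this recognition is made, the proof is a direct application of Jung's theorem combined with the triangle inequality; no further machinery is required, and in particular no appeal to the Nerve theorem or to geometric realizations is needed, since the inclusions are purely combinatorial statements about which subsets appear as simplices.
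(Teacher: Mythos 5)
Your proof is correct: the two outer inclusions are indeed immediate from the triangle inequality under the paper's conventions (\v{C}ech parameter = ball radius, Rips parameter = pairwise distance bound), and the middle inclusion $R_{\epsilon'}(\mathcal{X}) \subseteq C_\epsilon(\mathcal{X})$ follows exactly as you say from Jung's theorem, since $\epsilon'\sqrt{n/(2(n+1))} = (\epsilon'/2)\sqrt{2n/(n+1)} \leq \epsilon$ under the stated hypothesis. Note that the paper itself gives no proof---it cites the result to de Silva and Ghrist---and your argument is essentially the one in that cited source, where the sharp constant likewise comes from Jung's theorem.
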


	\subsection{Persistent homology}
	
	Given a point cloud $\mathcal{X}$ sampled evenly from nearby some underlying space $X\subseteq\mr^n$ as input, we could ask which parameter $\epsilon$ produces homology $H_p(C_\epsilon(\mathcal{X}))$ that most closely matches the homology~$H_p(X)$ of the underlying space. TDA methods sidestep this question, and instead consider all of the homology groups $H_p(C_\epsilon(X))$ simultaneously. Persistent homology provides an algebraic framework for tracking homology features as the parameter value changes. We summarize the categorical approach to persistent homology introduced in \cite{catphom}. The central objects of study in this framework are called \emph{persistence modules}. 
	
	\begin{definition} Let $k$ be a field. A \emph{persistence module} is a functor $F:(\mr,\leq)\to{\textbf{Vect}_k}$ from the poset $(\mr,\leq)$ to the category $\textbf{Vect}_k$ consisting of vector spaces over $k$ with linear maps between them. Explicitly, $F$ is determined by: 
		\begin{itemize}
			\item A k-vector space $F(\epsilon)$ for every $\epsilon\in\mr$
			\item A linear map $F(\epsilon\leq\epsilon'):F(\epsilon)\to F(\epsilon')$ for every pair of real numbers $\epsilon \leq \epsilon'$ such that: 
			\begin{itemize}
				\item $F(\epsilon\leq\epsilon)$ is the identity map from $F(\epsilon)$ to itself 
				\item Given real numbers $\epsilon \leq \epsilon' \leq \epsilon''$, $F(\epsilon \leq \epsilon'') = F(\epsilon'\leq\epsilon'')\circ F(\epsilon\leq\epsilon')$
			\end{itemize}
		\end{itemize}
		
		\noindent If $F$ and $G$ are both persistence modules, their direct sum $F\oplus G$ is a persistence module where $(F\oplus G)(\epsilon) = F(\epsilon)\oplus G(\epsilon)$ and similarly $(F\oplus G)(\epsilon\leq\epsilon') = F(\epsilon\leq\epsilon')\oplus G(\epsilon\leq\epsilon')$. 
		
		\noindent $F$ is (naturally) isomorphic to $G$, $F\cong G$, if for all real numbers $\epsilon \leq \epsilon'$ there exist isomorphisms from $F(\epsilon)\to G(\epsilon)$ and $F(\epsilon')\to G(\epsilon')$ such that the following diagram commutes: 
		\begin{center}
			\begin{tikzcd} 
				F(\epsilon) \arrow{d} \arrow{r}{F(\epsilon\leq\epsilon')} &
				F(\epsilon') \arrow{d} \\
				G(\epsilon) \arrow{r}{G(\epsilon\leq\epsilon')} &
				G(\epsilon')
			\end{tikzcd} 
		\end{center} 
	\end{definition}

	\begin{definition}
		\noindent A point $\epsilon\in\mr$ is \emph{regular} for a persistence module $F$ if there exists an interval $I\subseteq\mr$ where $\epsilon\in I$ and $F(a\leq b)$ is an isomorphism for all pairs $a\leq b \in I$. Otherwise $\epsilon$ is \emph{critical}. A functor is \emph{tame} if it has finitely many critical values. 
	\end{definition} 
	
	\begin{example}
		For any finite point cloud $\mathcal{X} \subseteq\mr^n$ and real numbers $0\leq \epsilon \leq \epsilon'$, it follows directly from the definition that $C_\epsilon(\mathcal{X}) \subseteq C_{\epsilon'}(\mathcal{X})$. Regarding the subset inclusion as an inclusion map and fixing $p\geq 0$ results in a sequence of vector spaces and $\frac{\mz}{2}$-linear maps $H_p(C_\epsilon(\mathcal{X})) \hookrightarrow H_p(C_{\epsilon'}(\mathcal{X}))$ from the functoriality of $H_p$. The assignment $\epsilon\mapsto H_p(C_\epsilon(\mathcal{X}))$ along with these linear maps induced by inclusion defines a tame persistence module $H_pC_\bullet(\mathcal{X})$, which we will denote by $HC$. An analagous persistence module exists for the Vietoris-Rips complex, which we will denote by $VR$. \done
	\end{example} 
	
	\begin{example}
		Let $I$ be an interval of the form $[a,b),(a,b],$ or $(a,b)$ where $a,b\in\bar{\mr} = \mr\cup\{-\infty,\infty\}$ and let $k$ be a field. The persistence module $\chi_I$ maps $\epsilon \in \mr$ to the vector space $k$ if $\epsilon \in I$, and maps~$\epsilon$ to the trivial vector space $0$ otherwise. For any real numbers $\epsilon \leq \epsilon'$, define $\chi_I(\epsilon\leq\epsilon')$ to be the identity map if both $\epsilon,\epsilon'\in I$, and the trivial map otherwise. \done
	\end{example}

	\begin{figure}
		\centering
		\includegraphics[scale=0.4]{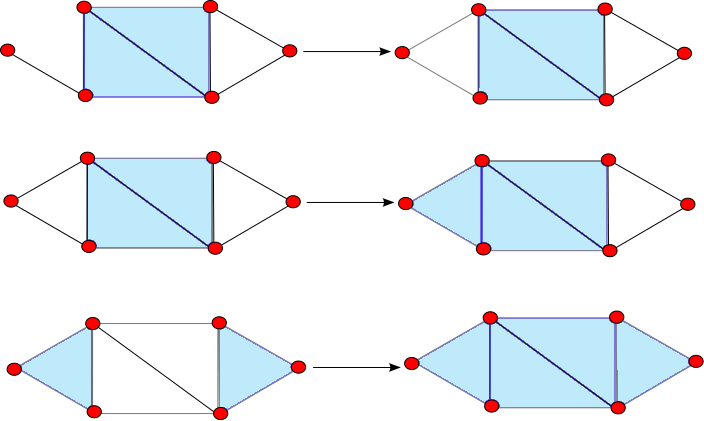}
		\caption{\label{eventsfig}
			Different events in the life of a homology feature. In the top row a feature is born, in the middle two features merge, and in the bottom a feature dies.
		}
	\end{figure}

	\begin{theorem}[Fundamental Theorem of Persistent Homology]\label{fundthm} Let $J$ be a tame persistence module. Then there is a finite set\footnote{More precisely, this set is a \emph{multiset}, which is a set where individual elements can occur more than once.} $\mathcal{B}_J$ of intervals on the real line, $\mathcal{B}_J = \{ I_1,\dots,I_l\}$, such~that
		$$ J \cong \chi_{I_1} \oplus \dots \oplus \chi_{I_l}$$
		and this decomposition is unique up to reordering of the intervals. 
	\end{theorem}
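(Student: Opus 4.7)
The plan is to prove the decomposition theorem by discretizing the tame persistence module $J$ to a representation of a finite type $A$ quiver, applying Gabriel's theorem and the Krull--Schmidt theorem, and then reassembling the resulting interval decomposition back into the setting of $\mathbb{R}$-indexed persistence modules. Tameness is the crucial hypothesis that makes this reduction possible: since $J$ has only finitely many critical values $\epsilon_1 < \epsilon_2 < \cdots < \epsilon_n$, all structural maps $J(a \leq b)$ are isomorphisms whenever $a,b$ lie in a common open interval of the complement of $\{\epsilon_1,\ldots,\epsilon_n\}$.

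First, I would choose a sample of real numbers $s_0 < \epsilon_1 < s_1 < \epsilon_2 < \cdots < \epsilon_n < s_n$, with one $s_i$ strictly between each pair of consecutive critical values (and one below and above the extremes). Because $\epsilon$ is regular for $J$ whenever $\epsilon \notin \{\epsilon_1,\ldots,\epsilon_n\}$, the information in $J$ is equivalent to the finite zig-zag of vector spaces and linear maps obtained by recording $J(s_i)$, $J(\epsilon_j)$, and the transition maps $J(s_{j-1} \leq \epsilon_j)$, $J(\epsilon_j \leq s_j)$. This data is a representation of a linear (type $A_{2n+1}$) quiver with all arrows oriented in the same direction. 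Gabriel's theorem then tells us that the indecomposable representations of this quiver are parametrised by intervals of vertices, and each such indecomposable has $k$ on the vertices of its interval, $0$ elsewhere, and identity maps connecting the copies of $k$. Applying the Krull--Schmidt theorem yields a decomposition of the discretized module into a direct sum of these indecomposable interval representations, unique up to reordering.

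Finally, I would lift each discrete interval summand back to an interval persistence module $\chi_I$ on $\mathbb{R}$ by choosing appropriate endpoints: if a summand is supported on a block of discretization vertices running from $\epsilon_a$ (or $s_{a-1}$) up through $\epsilon_b$ (or $s_b$), then the corresponding real interval has its left endpoint closed exactly when the summand is supported at the critical value $\epsilon_a$ but not at the regular sample $s_{a-1}$, and analogously on the right; the case of both endpoints being $\pm\infty$ handles unbounded summands. Checking that the direct sum of these $\chi_I$'s is isomorphic to $J$ as functors $(\mathbb{R},\leq) \to \textbf{Vect}_k$ reduces, by tameness, to matching the data on the discrete sample. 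Uniqueness of the multiset $\mathcal{B}_J$ then follows from the uniqueness statement of Krull--Schmidt applied to the discretization.

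The main obstacle I anticipate is the bookkeeping for endpoint types (half-open versus open versus closed intervals): this requires care at each critical value to determine whether a given indecomposable summand is ``born'' at $\epsilon_j$ itself or just after it, and symmetrically whether it ``dies'' at $\epsilon_j$ or persists through it. Once a clean convention is fixed for reading off open/closed endpoints from the behavior of the zig-zag at each critical value, the rest of the argument is a direct invocation of Gabriel and Krull--Schmidt. Invoking these two results as black boxes is the cleanest way to avoid grinding through an ad hoc decomposition argument directly on $\mathbb{R}$-indexed modules.
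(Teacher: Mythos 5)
The paper does not prove \cref{fundthm}; it quotes it as background, attributing the original graded $k[t]$-module proof to \cite{zcompute} and the categorical version to \cite{catphom}. There is therefore no internal proof to compare against, and your sketch must be judged on its own merits.

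Your route is correct in outline and is essentially the standard modern argument: use tameness to discretize $J$ at one sample in each regular component plus the critical values, recognize the result as a representation of an equioriented $A_{2n+1}$ quiver, invoke Gabriel and Krull--Schmidt to obtain an interval decomposition with uniqueness, and lift back to $(\mathbb{R},\leq)$. Three things to tighten up. First, you implicitly need each $J(\epsilon)$ to be a finite-dimensional vector space; tameness alone (finitely many critical values) does not force this, and without it Krull--Schmidt fails and the decomposition need not be finite. The paper's definition is silent on this, but it holds in the intended application (homology of finite complexes), so you should state it as a hypothesis rather than let it pass unnoticed. Second, Gabriel's theorem for arbitrary orientations of $A_n$ is more machinery than you need: with all arrows pointing the same way, the classification of indecomposables by intervals is a direct reduction of a chain of linear maps to staircase form, equivalently the structure theorem for finitely generated graded $k[t]$-modules, which is exactly the argument in \cite{zcompute}; unwinding the black box would make the proof self-contained at little extra cost. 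Third, on the endpoint bookkeeping you correctly flag as the delicate part: a closed interval $[\epsilon_a,\epsilon_b]$ can genuinely occur (a summand supported at two critical samples but at neither flanking regular sample, e.g.\ $\chi_{[0,1]}$ itself is tame with critical values $0$ and $1$), yet the paper's definition of $\chi_I$ only lists $[a,b)$, $(a,b]$, $(a,b)$. Your conventions for reading off endpoints should include the closed case rather than inherit that omission.
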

	
	Let $J$ be any tame persistence module and $B_J$ its corresponding set of intervals as in the Fundamental Theorem. The algebraic features encoded in the persistence module $J$ can be visualized (see \cref{barfig}) as a \emph{barcode} or a \emph{persistence diagram}. The set $\mathcal{B}_J$ is called the \emph{barcode} associated to $J$. The \emph{persistence diagram} of $J$, denoted $DJ$, is the set of points $(a,b)\in\bar{\mr}^2$ where $a$ is the left endpoint of an interval $I\in\mathcal{B}_J$, $b$ is the right endpoint of $I$ and $a,b\in\bar{\mr} = \mr\cup\{-\infty,\infty\}$. Note that the points $DJ$ which have a large straight-line distance to the diagonal correspond to long bars in the barcode. $DJ$ also contains all points of the form $(c,c)$ for all $c\in\bar\mr$.
	
	The original algebraic version of \cref{fundthm} for persistent homology appears in \cite{zcompute}, and a categorical version in \cite{catphom}. Each interval in the barcode of a tame functor can be viewed as describing the range of parameter values through which a single independent feature in the module persists. For a module like $H_pC_\bullet(\mathcal{X}) = HC$, an interval $[a,b)$ in the barcode corresponds to a p-dimensional void that first appears at parameter value $a$ and is ``filled in" by $(p+1)$-dimensional simplices at parameter value $b$. See \cref{eventsfig}. 
	
	Persistence diagrams for modules arising from the homology of finite simplicial complexes can be computed via the Persistence Algorithm (see e.g. \cite{harertop} \s VII.1). Note that intervals in the barcodes for such modules always have the form $[a,b)$ for some $a\leq b \in \bar\mr$. Persistence diagrams therefore contain the same amount of information as barcodes for these modules. In the worst case, the computational complexity for computing the persistence of $H_pR_\bullet(\mathcal{X}) = VR$ scales with the maximum number of $p+1$ simplices in $R_\epsilon$ attained at any parameter value~$\epsilon$. More precisely: if $\mathcal{X}$ contains $m$ points, calculating the full persistence diagram for $VR$ in the worst case has time complexity $O({{m \choose p+2}^\omega})$ where $\omega = 2.376$ is the best known exponent for matrix multiplication \cite{phmmtime}.
	
	The Persistence Algorithm has been significantly optimized since its original formulation (for instance: \cite{BauerChunks,TannerParallel,ChenTwist}). Despite improvement in optimizations and implementations, limiting both the size of the point cloud $m$ and the homology dimension $p$ is often necessary in practice to make persistent homology computations feasible (see \cite{roadmap}). Many applications restrict to computing PH only in dimensions $p\leq 2$. 
	Since memory consumption grows rapidly as the number of points $m$ increases,
	this necessitates keeping the size of point samples as low as~possible.
	
	\begin{figure}[h]
		\centering
		\begin{tabular}{lcr}
			\includegraphics[scale=0.33]{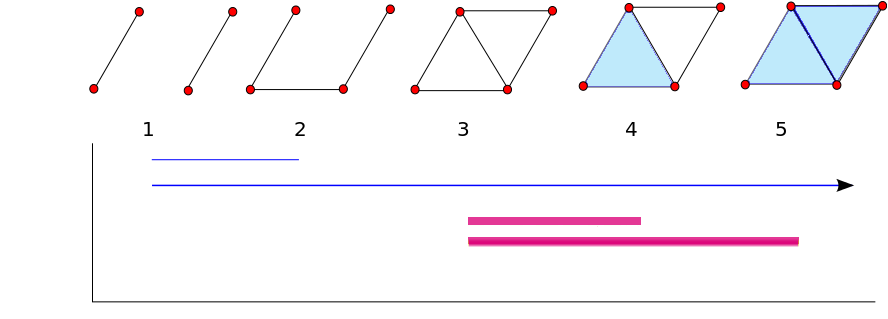} & &
			\includegraphics[scale=0.38]{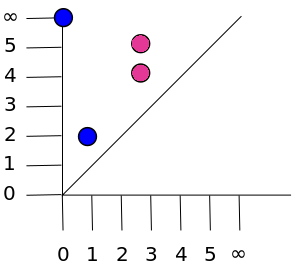}
		\end{tabular}
		\caption{\label{barfig}The persistence diagram and barcode of a filtered complex. The left figure shows the complex as it changes with parameter value, along with the corresponding functor's barcode. The right figure depicts the persistence diagram equivalent to the barcode. Blue bars and points represent $0$ dimensional homology, whereas pink bars and points represent $1$ dimensional homology. An arrow on a bar indicates that the homology feature corresponding to the bar ``lives forever"- the corresponding interval is of the form $[a,\infty)$.}
	\end{figure}

	\subsection{Homology inference}
	
	Suppose that $\mathcal{X}\subseteq\mr^n$ is a finite point cloud sampled from nearby the compact topological space $X\subseteq\mr^n$. A key property of persistent homology, first observed and proven in \cite{harertop}, is that persistent homology computed from $\mathcal{X}$ recovers the homology of the $X$ provided $X$ is a ``dense enough'' sample. To make this notion precise, recall that any compact topological space such as $X$ defines the distance-to-$X$ function $d_X:\mr^n\to\mr$. The function is given by $d_X(y) = \min_{x\in X} d(x,y)$ for any $y\in\mr^n$. Given any real number $\epsilon\geq 0$, define $X^\epsilon = d_X^{-1}(-\infty,\epsilon]$. The space $X^\epsilon$ is formed from $X$ by taking the union of all closed balls of radius $\epsilon$ in $\mr^n$ centered at points of $X$.

	\begin{definition}
		Let $A,B \subseteq \mr^n$ be compact and $0\leq \delta\leq\epsilon \in\mr$. 
		The set $A$ is a \emph{$(\delta,\epsilon)$-sample} of $B$ if $A \subseteq B^\delta$ and $B\subseteq A^\epsilon$.\label{densitydef}
	\end{definition}

	\begin{rem} \Cref{densitydef} is a specific instance of an \emph{interleaving} between generalized persistence modules as defined in \cite{bubenik2015metrics}. It is also generalization of the Hausdorff distance between subsets of metric space. Given compact $A,B\subseteq \mr^n$, the smallest $\epsilon$ such that $A$ and $B$ are $(\epsilon,\epsilon)$-samples of one another is the Hausdorff distance between $A$ and $B$.
	\end{rem}
	
	\begin{definition} Let $X \subseteq \mr^n$ be a compact metric space. The homological feature size of $X$, $\hfs(X)$, is the smallest critical value over all dimensions $k$ of the persistence module $\epsilon \mapsto H_k(X^\epsilon)$ (negative $\epsilon$ are assigned $\emptyset$). Equivalently $\hfs(X)$ is the smallest number $\epsilon$ such that thickening the space $X$ by $\epsilon$ changes its homology.
	\end{definition} 
	
	\begin{rem} Homological feature size of a space was introduced in \cite{harertop}, and is bounded below by the space's \emph{local feature size} \cite{amenta1999surface} and \emph{weak feature size} \cite{chazal2009sampling}. It follows from the definition that $\hfs(X) \geq 0$ for any space. The weak feature size of real semialgebraic sets is known to be positive (\cite{FuTubular} \S 5.3), and so the homological feature size of real algebraic varieties is positive as well.
	\end{rem} 
	
	\begin{theorem}[Homology Inference Theorem, \cite{harertop,chazalwfs}]\label{homthm} Let $\mathcal{X},X\subseteq \mr^n$, with $X$ compact and $\mathcal{X}$ a finite $(\delta,\epsilon)$-sample of $X$ where $0\leq\delta\leq\epsilon$ and $\hfs(X) > 2(\epsilon+\delta)$. Letting $HC = H_pC_\bullet(\mathcal{X})$, the dimension of $H_p(X)$ is the number of points in $D(HC)$, the persistence diagram of the functor $HC$, above and to the left of the point~$(\epsilon,2\epsilon+\delta)\in\bar\mr^2$.
	\end{theorem}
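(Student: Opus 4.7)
The plan is to translate the Čech complex persistence module into the persistence module of thickenings of $\mathcal{X}$, then execute a standard ``four-term squeeze'' using the sampling inclusions and the feature size bound, and finally convert the resulting rank into a count of points in the persistence diagram via the Fundamental Theorem.

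First I would invoke the Nerve Theorem (cited earlier) to identify, at every parameter $\alpha \geq 0$, the group $H_p(C_\alpha(\mathcal{X}))$ with $H_p(\mathcal{X}^\alpha)$, with the inclusion-induced maps intertwined. Consequently, the persistence module $HC$ is naturally isomorphic to $\alpha \mapsto H_p(\mathcal{X}^\alpha)$, so it suffices to work with thickenings of $\mathcal{X}$ and $X$. Next, using $\mathcal{X} \subseteq X^\delta$ and $X \subseteq \mathcal{X}^\epsilon$, I would derive the chain of inclusions
\[
X \;\subseteq\; \mathcal{X}^\epsilon \;\subseteq\; X^{\epsilon+\delta} \;\subseteq\; \mathcal{X}^{2\epsilon+\delta} \;\subseteq\; X^{2(\epsilon+\delta)}.
\]
Apply $H_p$ to obtain a commutative diagram of five vector spaces connected by inclusion-induced maps. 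The hypothesis $\hfs(X) > 2(\epsilon+\delta)$ ensures that the composite map $H_p(X) \to H_p(X^{2(\epsilon+\delta)})$ is an isomorphism; in particular both $H_p(X) \to H_p(X^{\epsilon+\delta})$ and $H_p(X^{\epsilon+\delta}) \to H_p(X^{2(\epsilon+\delta)})$ are isomorphisms as well, since they factor the isomorphism.

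The key diagram chase is then: because $H_p(X) \to H_p(\mathcal{X}^\epsilon) \to H_p(X^{\epsilon+\delta})$ composes to an isomorphism, the second arrow is surjective and the first is injective. Because $H_p(\mathcal{X}^\epsilon) \to H_p(X^{\epsilon+\delta}) \to H_p(\mathcal{X}^{2\epsilon+\delta})$ equals the persistence-module map $H_p(\mathcal{X}^\epsilon) \to H_p(\mathcal{X}^{2\epsilon+\delta})$ composed with the factorization through $H_p(X^{\epsilon+\delta})$, and the further composition $H_p(X^{\epsilon+\delta}) \to H_p(\mathcal{X}^{2\epsilon+\delta}) \to H_p(X^{2(\epsilon+\delta)})$ is an isomorphism forcing $H_p(X^{\epsilon+\delta}) \to H_p(\mathcal{X}^{2\epsilon+\delta})$ to be injective, I conclude that the rank of $H_p(\mathcal{X}^\epsilon) \to H_p(\mathcal{X}^{2\epsilon+\delta})$ equals $\dim H_p(X^{\epsilon+\delta}) = \dim H_p(X)$.

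Finally, I would translate this rank to a count of points in the persistence diagram. By the Fundamental Theorem of Persistent Homology (\Cref{fundthm}), $HC$ decomposes as a direct sum of interval modules $\chi_{I_j}$, and the rank of the structure map $HC(\epsilon) \to HC(2\epsilon+\delta)$ equals the number of intervals $I_j$ satisfying $a_j \leq \epsilon$ and $b_j > 2\epsilon+\delta$ (using that intervals here are of the form $[a,b)$). These are exactly the persistence diagram points $(a_j,b_j)$ lying strictly above and to the left of $(\epsilon, 2\epsilon+\delta)$, giving the claimed count. The main obstacle is the diagram chase establishing that the rank of the middle persistence map agrees with $\dim H_p(X)$; the feature-size hypothesis is used there in a subtle way, and care is needed to ensure that the factorizations through $H_p(X^{\epsilon+\delta})$ preserve both surjectivity on one side and injectivity on the other so that the rank identification is tight.
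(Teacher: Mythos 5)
Your proposal is correct and follows essentially the same route as the paper's proof: the Nerve Theorem identification of $HC$ with $\alpha\mapsto H_p(\mathcal{X}^\alpha)$, the five-term inclusion chain $X\subseteq\mathcal{X}^\epsilon\subseteq X^{\epsilon+\delta}\subseteq\mathcal{X}^{2\epsilon+\delta}\subseteq X^{2(\epsilon+\delta)}$, the identification $\rank\left(HC(\epsilon\leq 2\epsilon+\delta)\right)=\dim H_p(X)$ via factorization through $H_p(X^{\epsilon+\delta})$, and the interval count from \cref{fundthm}. One small repair: the intermediate maps $H_p(X)\to H_p(X^{\epsilon+\delta})$ and $H_p(X^{\epsilon+\delta})\to H_p(X^{2(\epsilon+\delta)})$ should be seen to be isomorphisms directly from $\hfs(X)>2(\epsilon+\delta)$ (no critical values up to $2(\epsilon+\delta)$), since ``they factor the isomorphism'' by itself yields only injectivity of the first and surjectivity of the second.
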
 
	\begin{proof}
		From the definition of $(\delta,\epsilon)$-sample we have inclusions $X\hookrightarrow \mathcal{X}^\epsilon \hookrightarrow X^{\epsilon+\delta} \hookrightarrow \mathcal{X}^{2\epsilon + \delta} \hookrightarrow X^{2(\epsilon+\delta)}$. The Nerve Theorem implies that $HC(a) \cong H_p(\mathcal{X}^a)$ for all $a\in\mr$. Applying homology to the sequence and using the assumption on the homology when thickening $X$, we obtain the commutative diagram 
		
		\[
		\begin{tikzcd}  
		H_p(X) \arrow{r} &
		HC(\epsilon) \arrow[bend right=20,swap]{rr}{h} \arrow{r} &
		H_p(X) \arrow{r} &
		HC(2\epsilon + \delta) \arrow{r} & 
		H_p(X)
		\end{tikzcd}
		\]
		
		where the maps from $H_p(X)$ to itself are isomorphisms. Consider the map $h$. Since there is an isomorphism from $H_p(X)$ to itself which factors through $h$, $\dim H_p(X) \leq \rank(h)$. We also have that $h$ factors through a map with domain $H_p(X)$, so that $\rank(h) \leq \dim H_p(X)$. Thus $\rank(h) = \dim H_p(X)$. The Theorem follows upon noting that $\rank(h)$ counts the number of intervals in the barcode for $HC$ with left endpoint at most $\epsilon$, and right end point greater than~$2\epsilon+\delta$. These intervals correspond to points above and to the left of $(\epsilon,2\epsilon+\delta)$ in the persistence~diagram. 
	\end{proof}
	
	\begin{corollary}\label{inferencecor} 
		Let $HC,X,\mathcal{X},\epsilon$, and $\delta$ be as in \cref{homthm}. Then the number of points above and to the left of $\left(2\epsilon\sqrt{\frac{n+1}{2n}},4\epsilon+2\delta\right)$ 
		in the persistence diagram for \hbox{$VR = H_pR_\bullet(\mathcal{X})$}
		is a lower bound for $\dim H_p(X)$ and upper bound for $\rank\left( HC\left( \epsilon\sqrt\frac{n+1}{2n} \leq (2\epsilon+\delta)\sqrt{\frac{2n}{n+1}}\right)\right)$. 
	\end{corollary}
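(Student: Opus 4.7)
The plan is to deduce both bounds from \cref{homthm} (Homology Inference) by using \cref{interthm} (the \u{C}ech/Vietoris-Rips interleaving) to translate between \u{C}ech and Vietoris-Rips parameters. Throughout, I will write $k=\sqrt{(n+1)/(2n)}$, so that the ratio condition of \cref{interthm} reads $\epsilon'\leq 2k\epsilon$. The key fact from persistence theory I will use is the standard rank function description: for any tame persistence module $J$ and real numbers $a\leq b$, the number of points of the persistence diagram $DJ$ above and to the left of $(a,b)$ equals the rank of the structure map $J(a\leq b)$. I will apply this to $J=VR$ at the corner $(2k\epsilon,\,4\epsilon+2\delta)$, reducing the corollary to bounds on $\rank\bigl(VR(2k\epsilon\leq 4\epsilon+2\delta)\bigr)$.

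For the lower bound $\#\{\text{points}\}\leq \dim H_p(X)$, I would assemble the chain of simplicial complex inclusions
\[
R_{2k\epsilon}(\mathcal{X})\ \subseteq\ C_{\epsilon}(\mathcal{X})\ \subseteq\ C_{2\epsilon+\delta}(\mathcal{X})\ \subseteq\ R_{4\epsilon+2\delta}(\mathcal{X}),
\]
where the first inclusion comes from \cref{interthm} with the ratio exactly saturated, and the last is the trivial $C_a\subseteq R_{2a}$. Applying $H_p$ and invoking functoriality, the Vietoris-Rips structure map $VR(2k\epsilon\leq 4\epsilon+2\delta)$ factors through the \u{C}ech map $HC(\epsilon\leq 2\epsilon+\delta)$, whose rank equals $\dim H_p(X)$ by \cref{homthm}; hence the number of persistence diagram points above and to the left of $(2k\epsilon,\,4\epsilon+2\delta)$ is at most $\dim H_p(X)$.

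For the upper bound on $\rank\bigl(HC(k\epsilon\leq (2\epsilon+\delta)/k)\bigr)$, I would use the reverse-orientation chain
\[
C_{k\epsilon}(\mathcal{X})\ \subseteq\ R_{2k\epsilon}(\mathcal{X})\ \subseteq\ R_{4\epsilon+2\delta}(\mathcal{X})\ \subseteq\ C_{(2\epsilon+\delta)/k}(\mathcal{X}),
\]
where the first inclusion is trivial, the middle inclusion uses $2k\epsilon\leq 4\epsilon+2\delta$ (which holds since $k\leq 1$), and the last is \cref{interthm} applied with $\epsilon'=4\epsilon+2\delta$ and $\epsilon=(2\epsilon+\delta)/k$, again saturating the ratio. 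Functoriality then factors $HC(k\epsilon\leq (2\epsilon+\delta)/k)$ through $VR(2k\epsilon\leq 4\epsilon+2\delta)$, which yields $\rank\bigl(HC(k\epsilon\leq (2\epsilon+\delta)/k)\bigr)\leq \rank\bigl(VR(2k\epsilon\leq 4\epsilon+2\delta)\bigr)$, the claimed upper bound.

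The main obstacle is purely bookkeeping: the intermediate parameters of both chains must exactly match the target corner $(2k\epsilon,\,4\epsilon+2\delta)$ in the Vietoris-Rips diagram, and each invocation of \cref{interthm} must be checked against the ratio condition $\epsilon'/\epsilon\leq 2k$. Once the parameters are lined up, the rest is formal: functoriality of $H_p$ turns each inclusion chain into a factorization of maps of persistence modules, and the rank function description converts the two rank inequalities into the claimed diagram-counting bounds. No further topological input beyond \cref{homthm} and \cref{interthm} is required.
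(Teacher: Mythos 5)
Your proof is correct and follows essentially the same route as the paper: both use the interleaving of Theorem 2.9 to sandwich the Vietoris--Rips structure map $VR(2k\epsilon\leq 4\epsilon+2\delta)$ between \u{C}ech maps (in one orientation for the lower bound, the reverse orientation for the upper bound) and then apply the rank-counting interpretation of persistence diagram points from the proof of Theorem 2.15. Your write-up simply spells out the parameter bookkeeping and the saturation of the ratio condition more explicitly than the paper does.
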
 
	\begin{proof}
		Let $a = 2\epsilon\sqrt{\frac{n+1}{2n}}$. By \cref{interthm}, we have the following commutative diagram of linear~maps
		\[
		\begin{tikzcd} 
		VR(a) \arrow{r} \arrow[bend right=20,swap]{rrr}{h} & 
		HC(\epsilon) \arrow{r} & 
		HC(2\epsilon+\delta) \arrow{r} & 
		VR(4\epsilon+2\delta)\text{.} 
		\end{tikzcd} 
		\] 
		It follows that $\rank(h) \leq \rank(HC(\epsilon \leq 
		2\epsilon+\delta))$ because $h$ factors through $HC(\epsilon \leq 2\epsilon+\delta)$. \Cref{homthm} shows that the rank of $HC(\epsilon \leq 2\epsilon+\delta)$ is $\dim H_p(X)$. As in the proof of \cref{homthm}, the rank of $h$ is precisely the number of points in the persistence diagram of $VR$ which are above and to the left of $(a,4\epsilon+2\delta)$. The proof for the upper bound is similar, and uses the sequence of maps $HC(\frac{a}{2}) \to VR(a) \to VR(4\epsilon + 2\delta) \to HC((2\epsilon+\delta)\sqrt{\frac{2n}{n+1}})$.
	\end{proof}
	
	\begin{figure}[!hb] 
		\centering
		\setlength{\aboverulesep}{0pt}
		\setlength{\belowrulesep}{0pt}
		\begin{tabular}{|c|c|}
			\hline
			\includegraphics[width=0.45\textwidth,height=0.45\textwidth,keepaspectratio]{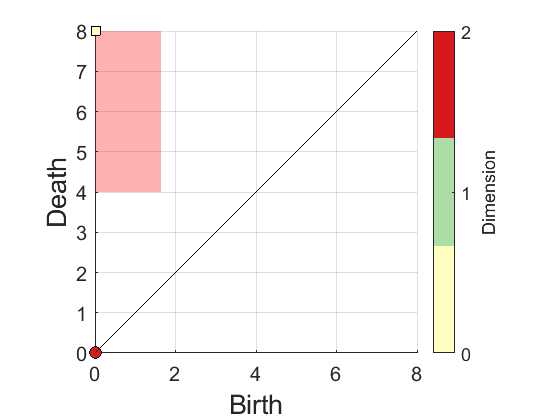} & \includegraphics[width=0.45\textwidth,height=0.45\textwidth,keepaspectratio]{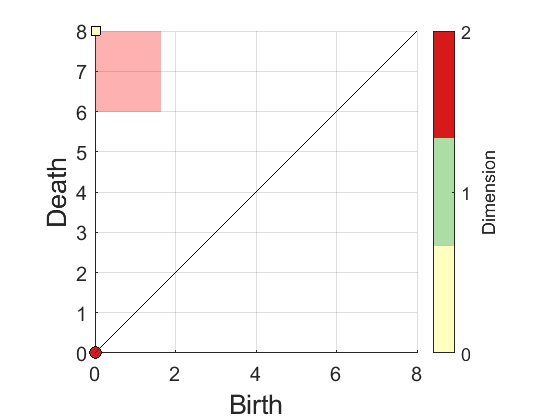} \\
			\hline 
		\end{tabular}
		
		\caption{Diagrams displaying the results of \cref{inferencecor}. The diagram on the left is for a $(0,1)$-sample of an underlying space with homological feature size at least 2. Any points falling into the pink region in the diagram on the left correspond to features in the underlying space. The diagram on the right is the same, but is for a $(1,1)$-sample of an underlying space with homological feature size at least 4. \label{fig:inference}}
	\end{figure} 
	
	
	\section{Sampling using numerical algebraic geometry}\label{Sec:Sampling}
	
	An algebraic variety $V\subset\bC^N$ is the solution
	set of a system of polynomial equations.  
	The real points of $V$, 
	\mbox{$V_\bR = V\cap\bR^N\subset\bR^N$}, 
	is a real algebraic variety.
	One approach to compute a point on $V_\bR$
	is by computing a point $x\in V_\bR$ which is a global minimizer of the 
	distance function between a given
	test point $y\in\bR^N$ and $V_\bR$ \cite{Seidenberg}.
	We summarize the use of numerical algebraic geometry to perform this computation based on \cite{RealMinDistance} (see also \cite{Mohab1,Mohab2,EuclideanDistance})
	with \cref{Sec:Generate} relying on this to
	generate a provably dense sampling of $V_\bR$.
	
	Suppose that $f_1,\dots,f_{N-d}\in\bR[x_1,\dots,x_N]$ 
	and let $V\subset\bC^N$ be the union of $d$-dimensional 
	irreducible components of the solution set of 
	$f = \{f_1,\dots,f_{N-d}\} = 0$.
	That is, $V$ is a pure \mbox{$d$-dimensional} algebraic variety
	with corresponding real algebraic variety $V_\bR = V\cap\bR^N$.
	We note that there is no loss of generality since one 
	can utilize randomization if more than $N-d$ polynomials are provided
	as shown in the following example.
	
	\begin{example}
		The affine cone over the twisted cubic curve
		is the irreducible surface ($d = 2$)
		$$V = \{(s^3,s^2t,st^2,t^3)~|~s,t\in\bC\}\subset\bC^4$$
		which is defined by $g_1 = g_2 = g_3 = 0$ where
		$$g(x) = \left[\begin{array}{c} x_2^2 - x_3 x_1 \\ x_2x_3 - x_4x_1 \\ x_2x_4 - x_3^2 \end{array}\right].$$
		Since $N = 4$, we can randomize down to $N-d = 2$ equations, say
		$f_1 = f_2 = 0$~where
		$$f(x) = \left[\begin{array}{c} x_2^2 - x_3x_1 + 2(x_2x_4 - x_3^2) \\ x_2x_3 - x_4x_1 - 3(x_2x_4 - x_3^2) 
		\end{array}\right].$$
		In particular, $V$ is one of the two irreducible components of 
		the solution set defined by $f_1 = f_2 = 0$
		with the other being the plane 
		defined by $3x_1 + 7x_2 - 4x_4 = x_1 - 7x_3 - 6x_4 = 0$.\done
	\end{example}
	
	\setcounter{theorem}{2}
	
	Given a test point $y\in\bR^N$, the approach of Seidenberg \cite{Seidenberg}
	is to compute a global minimizer~of
	\begin{equation}\label{eq:Minimize}
	\min\left.\left\{\sum_{i=1}^N (x_i-y_i)^2~\right|~x\in V_\bR\right\}
	\end{equation}
	which is accomplished by solving the Fritz John optimality conditions, namely solving
	$$G_y(x,\lambda) = \left[\begin{array}{c} f(x) \\ 
	\lambda_0 (x - y) + \sum_{i=1}^{N-d} \lambda_i \nabla f_i(x)
	\end{array}\right]$$
	on $\bC^N\times \bP^{N-d}$, where $\nabla f_i(x)$ is the gradient of $f_i(x)$ with respect to $x$.  The polynomial system~$G_y$ is a so-called square system
	consisting of $(N-d)+N$ polynomials on $\bC^N\times \bP^{N-d}$ so
	that it is amenable to solving via homotopy continuation.  
	In particular, for $\beta\in\bC^{N-d}$, consider
	$$H_{y,\beta}(x,\lambda,t) = \left[\begin{array}{c} f(x) - t\beta \\ 
	\lambda_0 (x - y) + \sum_{i=1}^{N-d} \lambda_i \nabla f_i(x)
	\end{array}\right].$$
	The following is immediate from coefficient-parameter continuation \cite{CoeffParam}
	showing that generic choices of parameter values $(y,\beta)$ leads to a well-constructed homotopy $H_{y,\beta}$.
	\begin{proposition}\label{prop:CoeffParam}
		There exists a nonempty Zariski dense open subset $U\subset\bC^{N}\times\bC^{N-d}$ 
		such that if $(y,\beta)\in U$, then 
		\begin{enumerate}
			\item the set $S\subset\bC^N\times\bP^{N-d}$ consisting of all solutions
			to $H_{y,\beta}(x,\lambda,1) = 0$ is finite and each is a nonsingular solution;
			\item the number of points in $S$ is equal to the maximum number,
			as $y'\in\bC^N$ and $\beta'\in\bC^{N-d}$ both vary,
			of isolated solutions of $H_{y',\beta'}(x,\lambda,1) = 0$;
			\item the solution paths defined by the homotopy \\
			$H_{y,\beta}(x,\lambda,t) = 0$ 
			starting at the points in $S$ at $t = 1$ are smooth for $t\in(0,1]$.
		\end{enumerate}
	\end{proposition}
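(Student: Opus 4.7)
The proof plan is to invoke the coefficient–parameter homotopy continuation theorem of Morgan and Sommese \cite{CoeffParam} applied to the system $H_{y,\beta}(x,\lambda,1) = 0$, viewed as a polynomial system in the variables $(x,\lambda) \in \bC^N \times \bP^{N-d}$ with parameters $(y,\beta) \in \bC^N \times \bC^{N-d}$. The key observation is that the entries of $H_{y,\beta}(x,\lambda,1)$ depend polynomially on $(x,\lambda,y,\beta)$, and for each fixed parameter value, the system is square (consisting of $2N - d$ equations) on the $(2N-d)$-dimensional space $\bC^N \times \bP^{N-d}$. This is precisely the setup required for the parameter continuation theorem.

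First, I would verify a key nondegeneracy hypothesis: namely, that there exists at least one parameter choice $(y^*,\beta^*)$ for which the solution set is nonempty, finite, and consists of nonsingular solutions. This can be established by picking a generic $y^*$ for the classical minimum-distance formulation (shifting $\beta^* = 0$ into the original Fritz John system $G_{y^*}$), and invoking the finiteness results for the Euclidean distance degree framework from \cite{EuclideanDistance,RealMinDistance}, where the system $G_y$ is known to have a well-defined generic finite solution count (the generic Euclidean distance degree), all of which are nonsingular. With this base case in hand, the parameter continuation theorem guarantees conclusions (1) and (2): outside of a proper Zariski closed subset $\Sigma \subsetneq \bC^N \times \bC^{N-d}$ (the branch/discriminant locus, defined by vanishing of appropriate resultants/Jacobian conditions), the number of isolated nonsingular solutions is constant and maximal, with $U = (\bC^N \times \bC^{N-d}) \setminus \Sigma$ Zariski dense and open.

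For conclusion (3), I would argue that for generic $(y,\beta) \in U$, the real line segment $\{(y,\beta,t) : t \in (0,1]\}$ in the extended parameter space $\bC^N \times \bC^{N-d} \times \bC$ avoids the discriminant locus $\widetilde{\Sigma}$ of the family $H_{y,\beta}(x,\lambda,t) = 0$ parameterized by $(y,\beta,t)$. Indeed, $H_{y,\beta}(x,\lambda,t)$ has the same form as $H_{y,t\beta}(x,\lambda,1)$ with $\beta$ replaced by $t\beta$, so the locus of $t \in (0,1]$ for which $(y,t\beta)$ meets $\Sigma$ is the zero set of a polynomial in $t$ with coefficients depending on $(y,\beta)$. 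For generic $(y,\beta)$, this polynomial either vanishes identically (a codimension condition avoided generically) or has finitely many complex roots, and a further generic condition ensures that none of these roots lie in $(0,1]$. The Implicit Function Theorem applied at each nonsingular solution then yields smooth paths parametrized by $t$.

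The main obstacle is the base case: establishing a single parameter value at which the solution count is finite and solutions are nonsingular. Once one such point is exhibited, the Zariski open genericity conclusions are immediate from the general parameter continuation framework. Rather than reconstruct this from first principles, the cleanest route is to cite \cite{CoeffParam,RealMinDistance} and note that the Fritz John formulation for the Euclidean distance problem falls squarely within their hypotheses, so the proposition follows as a direct consequence.
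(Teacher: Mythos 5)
Your core strategy---invoking Morgan--Sommese coefficient--parameter continuation for the square family $H_{y,\beta}(x,\lambda,1)$ over the parameter space $\bC^{N}\times\bC^{N-d}$, and observing that $H_{y,\beta}(x,\lambda,t)=H_{y,t\beta}(x,\lambda,1)$ so that the homotopy traces a segment in parameter space which, for generic $(y,\beta)$, misses the exceptional locus for $t\in(0,1]$---is exactly the paper's approach: the paper offers no further proof beyond citing \cite{CoeffParam} and declaring the proposition immediate. The one step of your proposal that would fail as written is the base case at $\beta^{*}=0$. For a singular variety (precisely the situation this method is designed to handle), the Fritz John system $G_{y}$ admits, for every $y$, solutions with $\lambda_0=0$ lying over the entire singular locus of the solution set of $f=0$; these can form positive-dimensional, singular components, so it is not true in general that $G_y$ has finitely many solutions, all nonsingular, for generic $y$. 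The nondegenerate parameter value you need should instead be taken at generic $\beta\neq 0$, where the fiber $f^{-1}(\beta)$ is smooth (so no $\lambda_0=0$ solutions occur) and the standard Euclidean-distance-degree argument of \cite{EuclideanDistance,RealMinDistance} gives finitely many nonsingular critical points for generic $y$; indeed, avoiding the degeneracies at $\beta=0$ is the entire reason the homotopy deforms $f(x)$ to $f(x)-t\beta$ rather than working with $G_y$ directly.
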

	
	Since $G_y(x,\lambda) = H_{y,\beta}(x,\lambda,0)$, 
	the endpoints of solution paths defined by $H_{y,\beta}(x,\lambda,t) = 0$ 
	contained in $\bC^N\times\bP^{N-d}$ are solutions of $G_y = 0$.
	Hence, by tracking the finitely many paths starting at the points
	in $S$ at $t = 1$, one obtains a finite set of solutions of $G_y = 0$,
	one of which corresponds with the global minimizer of \eqref{eq:Minimize}
	as shown in the following from \cite[Thm.~5]{RealMinDistance}.
	
	\begin{theorem}\label{parhomtheorem}
		Suppose that $y\in\bR^N$ and $\beta\in\bC^{N-d}$ such that
		the three items in \cref{prop:CoeffParam} hold.
		Let~$E$ be the set of endpoints contained in $\bC^N\times\bP^{N-d}$ 
		of the homotopy paths starting at the points of~$S$ at $t = 1$
		defined by $H_{y,\beta}(x,\lambda,t) = 0$
		and $\pi_1(x,\lambda) = x$.
		Then, $\pi_1(E)\cap V_\bR$ contains finitely many points, one
		of which is a global minimizer of \eqref{eq:Minimize}.
		Hence, $V_\bR = \emptyset$ if and only if $\pi_1(E)\cap V_\bR = \emptyset$.
	\end{theorem}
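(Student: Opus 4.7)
The plan is to combine three ingredients: a compactness argument for existence of a minimizer, the Fritz John conditions to realize the minimizer as a solution of $G_y = 0$, and parameter-homotopy theory (\cref{prop:CoeffParam}) to show that every isolated solution of $G_y = 0$ in $\bC^N\times\bP^{N-d}$ appears as an endpoint of the prescribed homotopy paths.

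First I would handle the trivial direction and the finiteness claim. Since $S$ is finite by item (1) of \cref{prop:CoeffParam}, the set $E$ has at most $|S|$ points, so $\pi_1(E)\cap V_\bR$ is automatically finite. If $V_\bR = \emptyset$, then $\pi_1(E)\cap V_\bR = \emptyset$ vacuously. Conversely, if $V_\bR$ is nonempty, I need to exhibit a global minimizer of \eqref{eq:Minimize} lying in $\pi_1(E)$.

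Next I would establish existence of a minimizer. Because $V_\bR$ is the real zero set of polynomials, it is closed in $\bR^N$, and the squared-distance function $x\mapsto \sum_i(x_i-y_i)^2$ is continuous and coercive. Intersecting $V_\bR$ with any closed ball containing at least one of its points produces a nonempty compact set on which the function attains its minimum, and this is a global minimum on $V_\bR$. Call the minimizer $x^*\in V_\bR$. The Fritz John optimality conditions (which, unlike KKT, require no constraint qualification) then yield multipliers $\lambda^* = (\lambda_0^*,\dots,\lambda_{N-d}^*)\neq 0$ with $\lambda_0^*\geq 0$ and $G_y(x^*,\lambda^*) = 0$; passing to the projective class $[\lambda^*]\in\bP^{N-d}$ produces a point $(x^*,[\lambda^*])\in\bC^N\times\bP^{N-d}$ that is a zero of $G_y$.

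The remaining, and in my view main, task is to identify $(x^*,[\lambda^*])$ with an endpoint of a path from $S$. Since $G_y(x,\lambda) = H_{y,\beta}(x,\lambda,0)$, items (1)--(3) of \cref{prop:CoeffParam} place us in the setting of coefficient-parameter homotopy continuation \cite{CoeffParam}: for generic $(y,\beta)$, $|S|$ equals the generic root count of the parametrized family $H_{y',\beta'}(\cdot,\cdot,1)=0$, and every isolated solution of the target $G_y = 0$ in $\bC^N\times\bP^{N-d}$ is the terminus of some smooth path starting at a point of $S$. The genericity hypothesis also guarantees that all solutions of $G_y = 0$ in $\bC^N\times\bP^{N-d}$ are isolated, so $(x^*,[\lambda^*])\in E$ and hence $x^* = \pi_1(x^*,[\lambda^*])\in\pi_1(E)\cap V_\bR$. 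The hardest point to pin down rigorously will be this last invocation of parameter-continuation theory, in particular that the genericity of $(y,\beta)$ from \cref{prop:CoeffParam} truly suffices to conclude that no isolated real solution of $G_y = 0$ escapes along a diverging path or gets missed at $t=0$; this is where one must cite the Morgan--Sommese parameter homotopy theorem and verify that it applies on the mixed affine-projective space $\bC^N\times\bP^{N-d}$, as is done in \cite{RealMinDistance}.
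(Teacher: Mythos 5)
The paper does not actually supply a proof of this theorem: after noting that endpoints of the homotopy paths landing in $\bC^N\times\bP^{N-d}$ are solutions of $G_y=0$, it simply cites Theorem~5 of \cite{RealMinDistance}. So there is no internal proof to match your attempt against; instead, judge your plan on its own merits as a reconstruction of that cited argument.

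Your high-level decomposition --- (i) coercivity and closedness give a global minimizer $x^*$ on $V_\bR$, (ii) the Fritz John conditions (no constraint qualification needed) supply a nonzero multiplier $\lambda^*$ with $G_y(x^*,[\lambda^*])=0$, (iii) Morgan--Sommese coefficient-parameter continuation guarantees that the homotopy recovers $(x^*,[\lambda^*])$ as an endpoint --- is exactly the right shape, and steps (i) and (ii) are correct and essentially complete. The real gap is in step (iii), and you already sense it but then paper over it incorrectly. You assert ``the genericity hypothesis also guarantees that all solutions of $G_y=0$ in $\bC^N\times\bP^{N-d}$ are isolated.'' That does not follow from \cref{prop:CoeffParam}: its three items constrain the start system $H_{y,\beta}(\cdot,\cdot,1)$ and the paths for $t\in(0,1]$, not the limiting system $G_y=H_{y,\beta}(\cdot,\cdot,0)$. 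Indeed $G_y=0$ can have positive-dimensional components at $t=0$ (for instance, over singular points of $V$ where the Jacobian of $f$ drops rank, a positive-dimensional family of multipliers $[\lambda]$ may satisfy the Lagrange equation). The coefficient-parameter theorem only promises that every \emph{isolated} solution of the target system is the limit of some path; it says nothing about non-isolated solutions. So what you actually need, and must argue separately (as \cite{RealMinDistance} does), is that for generic $y$ the particular Fritz John point $(x^*,[\lambda^*])$ above the global minimizer $x^*$ is isolated --- not that every solution of $G_y=0$ is. Without that, the conclusion $(x^*,[\lambda^*])\in E$, and hence $x^*\in\pi_1(E)$, is not established. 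Replacing the blanket isolation claim with a genericity argument focused on the minimizer's critical point would close the gap.
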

	
	Since $\pi(E)\cap V_\bR$ consists of finitely many points,
	a global minimizer of \eqref{eq:Minimize} is identified
	by simply minimizing over these finitely many points.


	\section{Generating samples}\label{Sec:Generate}
	
	This section presents an algorithm integrating \cref{parhomtheorem} with geometric tools to produce provably dense samples of real algebraic varieties. The input and output of the algorithm are as follows. \\ \\
	\noindent {\bf Input}: 
	\begin{itemize}
		\item Polynomial equations $f_1,\dots,f_{N-d} \in\mr[x_1,\dots,x_N]$ defining a pure $d$-dimensional real algebraic variety $X = \mvr(f_1,\dots,f_{N-d})$.
		\item A compact region $R\subseteq\mr^N$ of the form $R = [a_1,b_1]\times\dots\times [a_N,b_N]$. We call any regions of this form boxes.
		\item A sampling density $\epsilon > 0$.
		\item An estimation error $\delta$ with $0 \leq \delta \leq \epsilon$.
	\end{itemize}
	
	\noindent {\bf Output}: A (finite) set of points $\mathcal{X}\subseteq \mr^N$ that form a $(\delta,\epsilon)$-sample of $X\cap R$.
	
	\smallskip
	
	\Cref{prop:CoeffParam} and \cref{parhomtheorem} provide a computationally tractable approach to finding very accurate estimated solutions of the optimization problem \eqref{eq:Minimize} for generic $y\in\mr^N$. Following the terminology of these two results, we can define a subroutine $\texttt{MinDistance}$ which takes a point $y\in\mr^N$ as input, and outputs a set $S$ consisting of one point $s_q$ with $d(q,s_q)\leq \delta$ for every point $q\in\pi_1(E)\cap V$. The subroutine follows these steps on input $y$: 
	\begin{enumerate} 
		\item Choose a parameter $\beta\in\mc^{N-d}$ such that \cref{parhomtheorem} holds for the pair $(y,\beta)$, which
		exists for generic $y$, and construct the homotopy $H_{y,\beta}$ using the polynomial system $f$ defining $X$.
		\item Track the homotopy paths of $H_{y,\beta}$, which are guaranteed to exist by \cref{parhomtheorem}, to obtain the elements of $\pi_1(E)\cap V_\mr$ up to numerical error $\delta$.
	\end{enumerate} 
	
	The smallest distance from $y$ to a point in $\texttt{MinDistance}(y)$ solves the problem \eqref{eq:Minimize} up to error $\delta$. Repeatedly solving the minimum distance problem this way yields enough information to construct a provably dense sampling of $X$. Neglecting estimation error momentarily, the sampling algorithm's core consists of a short loop which computes the desired sample points iteratively. Denoting the open ball of radius $r$ centered at $y$ by $B_r(y)$ for any $r > 0$, this short loop is:
	
	\begin{enumerate}
		\item Choose an appropriate new ``test point" $y\in\mr^N$.
		\item Run $\texttt{MinDistance}(y)$ and place the returned points into the set of output points. Each sample point $s$ covers a region $B_\epsilon(s)$ of points in $X$ that are within distance $\epsilon$ of $s$. Let $d=d_X(y)$ be the minimum distance from $y$ to $X$ which can be calculated from the points returned by $\texttt{MinDistance}(y)$. 
		Thus, the region $B_d(y)$ does not contain any points of $X$. Store information about $B_d(y)$ and $B_\epsilon(s)$ (for all returned sample points $s$) for later use.
		\item Check to see if the union all of the regions of the form $B_\epsilon(s)$ and $B_d(y)$ found in previous iterations of Step 2 contains $R$. If so, stop and output the sample points which have been collected. Otherwise, return to Step 1. 
	\end{enumerate}
	
	\begin{rem}\label{algremark} The stopping condition in Step 3 above guarantees that the outputted sample is a dense sample of $X\cap R$. Suppose that $R\subseteq \mathcal{B}\cup\mathcal{C}$, where $\mathcal{B} = \cup_{s\in S} B_\epsilon(s)$ for some subset $S$ of $X$, and $\mathcal{C}\cap X = \emptyset$. Then for any $x\in X\cap R$, it follows that $x\in\mathcal{B}$, so $d(x,s_0) < \epsilon$ for some $s_0\in S$. Thus, $d_S(x) < \epsilon$. 
	\end{rem}
	
	The full sampling algorithm tracks the spatial information for Steps 1 and 3 above by recursively dividing the region $R$ into smaller boxes as necessary. Let $\texttt{SplitBox}$ be a subroutine which takes as input a box $A = [c_1,d_1]\times\dots\times [c_N,d_N] \subseteq \mr^N$. It returns a pairwise disjoint set of smaller boxes $\{A_1,\dots,A_k\}$ such that $A = \cup_{i=1}^k A_i$. We can arrange repeated applications of $\texttt{SplitBox}$ into a tree~structure. 
	
	\begin{definition} Let $T_R$ be a tree with root $R$ whose nodes are boxes in $\mr^N$. The children of $R$ in $T_R$ are the elements of $\texttt{SplitBox}(R)$. Suppose that all the $(n-1)$-children of $R$ in $T_R$ have been defined where $n > 1$. Then the $n$-children of $R$ are the elements of $\texttt{SplitBox(C)}$ for every $(n-1)$-child $C$ of $R$. The elements of $\texttt{SplitBox(C)}$ have parent node $C$.
	\end{definition}
	
	For technical reasons, repeated applications of $\texttt{SplitBox}$ must eventually split an input region $A = [c_1,d_1]\times\dots\times[c_N,d_N]$ into arbitrarily small pieces. Put precisely, given any $\gamma > 0$ and input region $A$, there is some $n$ such that all $n$-children of $A$ in $T_A$ have maximum side length at most~$\gamma$. As an example, consider a version of $\texttt{SplitBox}$ that when applied to $A$ returns the two boxes $[c_1,d_1]\times\dots\times[c_j,\frac{c_j+d_j}{2}] \times \dots \times [c_N,d_N]$ and $[c_1,d_1]\times\dots\times[\frac{c_j+d_j}{2},d_j]\times\dots\times [c_N,d_N]$ where $\vert d_j - c_j \vert$ is the maximum side length for the box $A$. Using $\texttt{SplitBox}$ the sampling algorithm conducts a breadth first search of $T_R$ while iteratively building the output sample.

	\setcounter{theorem}{3}
	\begin{algorithm}
		\begin{algorithmic}[1]
			\State Initialize an empty spatial database $\textsc{CoveredRegions}$ which can store and retrieve information about subregions of $\mr^N$
			\State Initialize an empty list $\textsc{SampleOutput}$ of points in $\mr^N$
			\For{each node $M$ in $T_R$ not marked ``done", iterated via breadth first search}
			\If{The maximum side length of $M$ is at most $\frac{\epsilon-\delta}{\sqrt{N}}$ or $M$ does not intersect any region stored in $\textsc{CoveredRegions}$} 
			\State Run $\texttt{MinDistance(y)}$ where $y$ is the center point of $M$, returning a set of sample points
			\State $S$ with minimum distance $d$ from $y$ to any point in $S$. Add regions $B_{d-\delta}(y)$ and
			\State $B_{\epsilon}(s)$ for each $s\in S$ to $\textsc{CoveredRegions}$. Add each $s\in S$ to $\textsc{SampleOutput}$.
			\EndIf
			\If{$M\subseteq B$ for any region $B$ contained in $\textsc{CoveredRegions}$} 
			\State Mark $M$ and all nodes in the subtree rooted at $M$ ``done" and stop  \State searching the subtree rooted at $M$.
			\EndIf
			\If{All unsearched boxes in $T_R$ are marked ``done"} 
			\State End loop.
			\EndIf
			\EndFor
			\State \Return $\textsc{SampleOutput}$
		\end{algorithmic}
		\caption{\textsc{Sampling algorithm} }
		\label{sampalgorithm}
	\end{algorithm}
	
	\setcounter{theorem}{3}
	
	\begin{theorem}\Cref{sampalgorithm} terminates and outputs a $(\delta,\epsilon)$-sample of $X\cap R$. \label{algtheorem}
	\end{theorem}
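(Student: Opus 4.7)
I would structure the proof around two invariants maintained throughout execution, and then use these first to establish correctness conditional on termination, and afterwards to establish termination via a compactness argument.

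\textbf{Invariants.} I would first verify that during execution: (I) every point $s$ added to \textsc{SampleOutput} lies in $X^{\delta}$, since by construction $\texttt{MinDistance}(y)$ returns points within $\delta$ of $\pi_{1}(E) \cap V_{\bR} \subseteq X$; and (II) every ball of the form $B_{d-\delta}(y)$ stored in \textsc{CoveredRegions} is disjoint from $X$, because the true minimum distance $d_{X}(y)$ is at least $d-\delta$ (the numerical estimate is within $\delta$ of the global minimum of \eqref{eq:Minimize} by \cref{parhomtheorem}), while every ball of the form $B_{\epsilon}(s)$ has center $s \in \textsc{SampleOutput}$.

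\textbf{Correctness, assuming termination.} Invariant (I) immediately gives $\mathcal{X} \subseteq X^{\delta}$. For the density condition $X \cap R \subseteq \mathcal{X}^{\epsilon}$, I would observe that the termination condition together with the marking rule implies that every point $p \in R$ lies in some node $M$ of $T_{R}$ with $M \subseteq B$ for some $B \in \textsc{CoveredRegions}$ (any $p$ is eventually in a box processed by BFS, and non-done boxes keep spawning children). If $p \in X \cap R$, invariant (II) rules out $B = B_{d-\delta}(y)$, so $B = B_{\epsilon}(s)$ for some $s \in \textsc{SampleOutput}$, giving $d(p,s) < \epsilon$.

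\textbf{Termination.} Here I would argue by contradiction. If the algorithm does not terminate, then the subtree of $T_{R}$ consisting of visited-but-not-done nodes is infinite and finitely branching, so by König's lemma it contains an infinite descending chain $R = M_{0} \supsetneq M_{1} \supsetneq \cdots$. The stated requirement on $\texttt{SplitBox}$ forces the maximum side length of $M_{n}$ to tend to $0$, so $\bigcap_{n} M_{n} = \{p\}$ for a single $p \in R$. For $n$ large enough the side length of $M_{n}$ is at most $(\epsilon - \delta)/\sqrt{N}$, which triggers \texttt{MinDistance} at the center $y_{n}$ of $M_{n}$. I would then split on whether $p \in X$: if $p \in X$, then $d_{X}(y_{n}) \le \|y_{n}-p\| \le (\epsilon-\delta)/2$, so the returned sample point $s_{n}$ satisfies $d(y_{n},s_{n}) \le (\epsilon+\delta)/2$, and the triangle inequality combined with the half-diameter bound gives $M_{n} \subseteq B_{\epsilon}(s_{n})$; if instead $p \notin X$, closedness of $X$ gives $d_{X}(p) > 0$, so for all sufficiently large $n$ the estimate $d_{n} - \delta$ exceeds the half-diameter of $M_{n}$, yielding $M_{n} \subseteq B_{d_{n}-\delta}(y_{n})$. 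In either case $M_{n}$ would be marked done on the step it was visited, contradicting the choice of the infinite chain.

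\textbf{Main obstacle.} The most delicate part is the geometric accounting in the termination step: carefully choosing $n$ so that the box $M_{n}$ is strictly contained (not just included up to the boundary) in one of the newly added balls, and checking that the open/closed-ball conventions and the interplay between $\epsilon$, $\delta$, and the half-diagonal $\sqrt{N}\cdot L/2$ all work out. A secondary but non-trivial point is verifying that when BFS finally reaches $M_{n}$, no ancestor has already been marked done in a way that affects the argument (which is in fact compatible with the contradiction setup, since a done ancestor would force $M_{n}$ done as well).
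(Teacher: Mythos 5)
Your overall structure (invariants, correctness-conditional-on-termination, then termination) matches the paper's, which relies on an auxiliary lemma (\cref{alglemma}) giving (1) a geometric covering fact about small boxes and (2) a leaf-covering fact for the split tree; but there is a genuine gap in your termination argument.

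The problem is the case split on whether the limit point $p$ belongs to $X$. In the branch $p\notin X$, you claim that $d_X(p)>0$ forces $d_n-\delta$ to eventually exceed the half-diameter of $M_n$. Unwinding the error bounds, the returned estimate $d_n$ satisfies only $d_n \geq d_X(y_n)-\delta \geq d_X(p)-d(y_n,p)-\delta$, hence $d_n-\delta \geq d_X(p)-d(y_n,p)-2\delta$. Since $\delta$ is permitted to be any value in $[0,\epsilon]$ (it is not required to be infinitesimal), a point $p\notin X$ with $0<d_X(p)\leq 2\delta$ makes the right-hand side negative in the limit, so the inequality you want can fail for all $n$. In that regime the box $M_n$ would actually be swallowed by $B_\epsilon(s_n)$ rather than by $B_{d_n-\delta}(y_n)$, but your branch does not establish this. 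The fix is to case-split on the returned value $d_n$ itself rather than on membership of $p$ in $X$: if $d_n \leq \tfrac{\Delta_n}{2}+\delta$ (with $\Delta_n$ the diagonal of $M_n$), then the triangle inequality gives $M_n\subseteq B_\epsilon(s_n)$ once $\Delta_n<\epsilon-\delta$; otherwise $d_n-\delta>\tfrac{\Delta_n}{2}$ and $M_n\subseteq B_{d_n-\delta}(y_n)$. This is exactly the content of part (1) of the paper's \cref{alglemma}, and it handles all $p$ uniformly with no need to know whether $p\in X$.

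Two smaller remarks. First, the paper does not need K\"onig's lemma: since \texttt{SplitBox} eventually produces boxes of side length below $(\epsilon-\delta)/\sqrt N$ at some uniform depth $n$, and the lemma shows every such box is marked done once processed, the BFS terminates at depth at most $n$; your compactness/contradiction route is sound in spirit but heavier than necessary. Second, your ``correctness assuming termination'' paragraph silently assumes that the leaves of the searched subtree cover $R$; the paper isolates this as part (2) of \cref{alglemma} and proves it by induction on tree depth, which you should make explicit. Your ``main obstacle'' concern about strict versus non-strict containment is real and is handled in the paper by taking the side-length bound strict ($\Delta<\epsilon-\delta$), so the final inequality $d(a,s)<\epsilon$ is strict.
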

	
	Before proving \cref{algtheorem}, we consider the following.
	
	\begin{lemma} 
		(1) Let $A$ be a box in $\mr^N$ with maximum side length less than $\frac{\epsilon-\delta}{\sqrt{N}}$. If $y$, $S$, and~$d$ take values as in lines 5-7 of \cref{sampalgorithm}, then either $A \subseteq B_{\epsilon}(s)$ where $s\in S$ minimizes the distance to $y$, or $A \subseteq B_{d-\delta}(y)$.
		(2) Let $T_A$ be a tree of boxes in $\mr^N$ with root $A$ constructed via $\texttt{SplitBox}$ in the same manner as~$T_R$, and let $T_A'$ be a finite subtree of $T_A$ such that if a node $M$ is in $T_A'$ and is not a leaf, all the first children of $M$ in $T_A$ are contained in $T_A'$. If $\mathcal{L} = \{L_1,\dots,L_k\}$ are the leaf nodes of $T_A'$, the equality $A = \cup_{i=1}^k C_i$ follows. 
		\label{alglemma} 
	\end{lemma}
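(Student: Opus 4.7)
The plan for part (1) is to reduce the claim to a triangle-inequality case analysis by first controlling the geometry of the box. Interpreting the statement in the context of the algorithm (so that $y$ is the center of $A$), I would observe that if $A$ has maximum side length $L < (\epsilon-\delta)/\sqrt{N}$, then for every point $p \in A$ the distance to the center satisfies $d(p,y) \leq \tfrac{1}{2}\sqrt{L_1^2+\cdots+L_N^2} \leq \tfrac{L\sqrt{N}}{2} < \tfrac{\epsilon-\delta}{2}$. This single bound is what makes the two containments tight.

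Next, I would split on the value of $d$, the minimum distance from $y$ to any point of $S$. If $d \leq (\epsilon+\delta)/2$, let $s \in S$ realize that minimum; then for any $p \in A$ the triangle inequality gives $d(p,s) \leq d(p,y) + d(y,s) < \tfrac{\epsilon-\delta}{2} + \tfrac{\epsilon+\delta}{2} = \epsilon$, so $A \subseteq B_\epsilon(s)$. If instead $d \geq (\epsilon+\delta)/2$, then $d - \delta \geq \tfrac{\epsilon-\delta}{2} > d(p,y)$ for every $p \in A$, so $A \subseteq B_{d-\delta}(y)$. The two cases overlap at $d = (\epsilon+\delta)/2$, covering all possibilities. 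The bookkeeping task here is to track the strict versus non-strict inequalities carefully---the strictness of the diameter bound (from $L$ being strictly less than the threshold) is what allows both endpoints of the case split to be handled cleanly, and this is the only delicate point in part (1).

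For part (2), I would proceed by structural induction on the finite tree $T_A'$, inducting on the number of non-leaf nodes. In the base case $T_A'$ consists only of the root $A$, which is then its unique leaf, and the equality $A = A$ is trivial. For the inductive step, since $A$ is not a leaf of $T_A'$, the hypothesis that every non-leaf node has all of its first children in $T_A'$ guarantees that $\texttt{SplitBox}(A) = \{C_1^{(1)}, \ldots, C_m^{(1)}\}$ is contained in $T_A'$, and by the defining property of $\texttt{SplitBox}$ we have $A = \bigcup_{j=1}^m C_j^{(1)}$. Each subtree of $T_A'$ rooted at a first child $C_j^{(1)}$ inherits the same ``all-first-children'' property and has strictly fewer non-leaf nodes, so by the inductive hypothesis $C_j^{(1)}$ equals the union of leaves in that subtree. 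Since the leaves of $T_A'$ are exactly the disjoint union of the leaves of these subtrees, assembling the equalities gives $A = \bigcup_{i=1}^k L_i$, as required. Part (2) is entirely routine once the inductive framework is set up; the only substantive step in the whole lemma is the triangle-inequality split in part (1).
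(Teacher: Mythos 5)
Your proof is correct and follows essentially the same strategy as the paper: for part (1), a triangle-inequality case split on $d$ against a threshold (the paper uses $\Delta/2 + \delta$ with $\Delta$ the box diagonal, while you use the fixed threshold $(\epsilon+\delta)/2$, which is a cosmetic simplification since $\Delta < \epsilon - \delta$ makes the two interchangeable); for part (2), induction on the subtree (the paper inducts on depth, you on the count of non-leaf nodes, which is an equivalent well-founded measure).
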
 
	\begin{proof} 
		(1): Let $\gamma$ be the maximum side length of $A$ and suppose that $y=(y_1,\dots,y_N)^T$. Without loss of generality we can replace $A$ with the hypercube $\Pi_{i=1}^N [y_i-\frac{\gamma}{2},y_i+\frac{\gamma}{2}]$ since $A$ is a subset of the latter box. $A$ has diagonal length $\Delta = \gamma\sqrt{N}$, which by assumption is less than $\epsilon - \delta$. Let $a\in A$ be an arbitrary point, and note that the maximum distance from $a$ to $y$ is half the length of the diagonal $\Delta$. Suppose that $d = d(y,s) \leq \frac{\Delta}{2} + \delta$. Then for any point in $a\in A$, it follows that $d(a,s) \leq d(y,s) + d(y,a) \leq \Delta + \delta < \epsilon - \delta + \delta = \epsilon$. Therefore $A \subseteq B_{\epsilon}(s)$. Otherwise, suppose $d = d(y,s) > \frac{\Delta}{2}+\delta$. Then $d-\delta > \frac{\Delta}{2}$. Since the maximum value of $d(y,a)$ is $\frac{\Delta}{2}$, it follows that $a\in B_{d-\delta}(y)$, so $A \subseteq B_{d-\delta}(y)$. 
		\\ \\ 
		(2): We proceed by induction on the maximum depth of the tree $T_A'$. Suppose that the depth of $T_A'$ is $0$. Then $T_A'$ is a tree that consists of one leaf node, the box $A$, and (2) holds trivially. Suppose that (2) holds for any box $B$, corresponding tree $T_B$, and subtree $T'_B$ with depth at most $k-1$ where $k\geq 1$. Then if $T_A'$ has depth $k$, $T_A'$ contains all the nodes $\texttt{SplitBox}(A) = \{A_1,\dots,A_j\}$ by assumption. Note that $T_A'$ is the union of the root $A$ along with finite subtrees fulfilling the conditions of (2) rooted at $A_1,\dots,A_j$, and that the set $\mathcal{L}$ of leaf nodes of $T_A'$ is the union $\mathcal{L}_1\cup \dots \cup \mathcal{L}_j$ where $\mathcal{L}_i$ is the set of leaf nodes of the subtree rooted at $A_i$. By the induction assumption, $\cup_{L\in \mathcal{L}_i} L = A_i$. Therefore $A = \cup_{i=1}^j A_i = \cup_{i=1}^j \cup_{L\in\mathcal{L}_i} L = \cup_{L\in\mathcal{L}} L$ as desired. 
	\end{proof}
	
	\begin{proof}[ Proof of \cref{algtheorem}]
		(Termination): Let $\alpha = \frac{\epsilon-\delta}{\sqrt{N}}$. By our assumption on $\texttt{SplitBox}$ there is an~$n$ such that the $n$-children of $R$ in $T_R$ have side length less than $\alpha$. Therefore if $M$ is any $n$-child of $R$, lines 5-7 of the algorithm will run if $M$ is searched. Part (1) of \cref{alglemma} shows that lines~10-11 will run on $M$ subsequently. Therefore the algorithm's breadth first search terminates at maximum depth $n$. 
		\\ \\
		\noindent(Completeness): Let $T'_R$ be the subtree of $T_R$ which \cref{sampalgorithm} searches 
		before terminating. By construction, $T'_R$ fulfills the conditions of \cref{alglemma} 
		part (2). If $\sL$ is the set of leaf nodes in $T'_R$, then $R = \cup_{L\in \sL} L$ 
		follows. Let $S$ be $\textsc{SampleOutput}$ which was returned by the algorithm and $Y$ be 
		the set of center points of balls with form $B_{d-\delta}(y)$ in $\textsc{CoveredRegions}$. 
		By construction any element $L\in\sL$ has $L\subseteq B_{\epsilon}(s)$ for some $s\in S$ or 
		$L\subseteq B_{d-\delta}(y)$ for some $y\in Y$. By \cref{parhomtheorem} and the definition of $\texttt{MinDistance}$ it follows 
		that $X\cap (\cup_{y\in Y} B_{d-\delta}(y)) = \emptyset$. Similarly to \cref{algremark}, 
		we have that $X\cap R \subseteq \cup_{s\in S} B_{\epsilon}(s)$. We also have $d_X(s) \leq \delta$ for all $s\in S$ by definition of $\texttt{MinDistance}$. Thus $S$ is a $(\delta,\epsilon)$-sample of $X\cap R$. 
		
	\end{proof}

	In practice, there are two opposing resource demands the algorithm needs to balance. The $\texttt{MinDistance}$ step in \cref{sampalgorithm}'s core loop consumes significantly more time than any other individual step, so an optimal run of the Algorithm makes as few calls to $\texttt{MinDistance}$ as possible. Resource demands for processing the Algorithm's output with data analysis methods scale with the number of points in the sample. Also, with more points in the sample more resources are consumed accessing and storing information in the spatial database used throughout the Algorithm. An optimal output sample therefore contains as few points as possible while being provably dense. We can adjust the Algorithm's components, integrating geometric heuristics both to reduce $\texttt{MinDistance}$ calls and output sizes. These heuristics include: 
	\begin{itemize}
		\item \emph{Dynamic box splitting} - Instead of splitting along the longest side of a box $B$ with $\texttt{SplitBox}$, split $B$ so that the largest intersection (by Lebesgue measure) of $B$ with a region stored in $\textsc{CoveredRegions}$ is a box in the output. 
		\item \emph{Dynamic sampling} - Refuse to add points to the output sample if their distance to the nearest point already in $\textsc{SampleOutput}$ is less than some threshold. 
		\item \emph{Heuristic tree searching} - Place priority on first searching and applying $\texttt{MinDistance}$ to the largest boxes (by Lebesgue measure) at each level of depth in the search tree. Larger boxes represent larger regions which potentially do not intersect $X$, and so a single run of $\texttt{MinDistance}$ has the potential to lead to the exclusion of a much larger box $B_{d-\delta}(y)$. 
	\end{itemize}
	See \cite{parkersthesis} for an extended discussion of both the heuristics and implementation.
	
	
	\section{Examples}\label{Sec:Examples}

	\Cref{sampalgorithm} has been implemented and used to produce dense samples of varieties for further processing via persistent homology. The implementation is publicly available as the Python package \texttt{tdasampling} on PyPi and the package source code is available at \url{https://github.com/P-Edwards/tdasampling}. Data, algorithm parameters, plots, and other scripts for the examples are available at \url{https://github.com/P-Edwards/sampling-varieties-data}. Vietoris-Rips persistent homology calculations were performed using the package Ripser \cite{ripser}. Plots of persistence diagrams were produced using a modified version of a plotting script included with DIPHA \cite{dipha}. 
	
	In the following examples, the persistence diagrams are decorated as in \cref{fig:inference}. Points in the highlighted region of an example's diagram correspond to homological features in the underlying variety, assuming the diagram was produced from a $(\delta,\epsilon)$-sample of a variety with homological feature size at least $2(\epsilon+\delta)$.
	
	\subsection{Clifford torus}\label{Sec:CliffordTorus}
	The Clifford torus $T$ is an embedding of the product of two circles, $S^1\times S^1$, into $\mr^4$. It is also a pure 2-dimensional algebraic variety defined by two equations in four variables: 
	$$T = \mvr(x^2_1 + y_1^2 - \frac{1}{2},x_2^2 + y_2^2 - \frac{1}{2}).$$ 
	Since $T$ is a torus, its Betti numbers are known theoretically to be $\beta_0 = 1, \beta_1 = 2,$ and $\beta_2 = 1$. Note that $T$ is compact as it is contained in the closed ball $\widebar{B_1(0)}$ in $\mr^4$. A sample of $T$ was obtained by using \cref{sampalgorithm} to produce a $(10^{-7},0.14)$ sample of $T$ (the bounding box used was $[-1,1]^4$). The sample contains 5,689 points. 
	
	Vietoris-Rips persistent homology thresholded to a parameter value of $0.60$ was subsequently calculated for the sample. The points in the persistence diagram represent features born before $0.60$, and the points on the top edge of the diagram represent features that do not die at $0.60$ or earlier. The shaded region in the diagram is derived from \cref{inferencecor} assuming the homological feature size of the torus is at least $2(0.14+10^{-7})$. Recall from the Corollary that the number of points above and to the left of the point $\left(2\epsilon\sqrt{\frac{4+1}{(2)(4)}},4\epsilon+2\delta\right)$, where the sample is a $(\delta,\epsilon)$ sample of~$T$, is a lower bound on $T$'s Betti numbers. In this case, $\left(2\epsilon\sqrt{\frac{4+1}{(2)(4)}},4\epsilon+2\delta\right) \approx (0.221, 0.56)$. In \cref{torus_diagram}, the shaded region consists of all points above and to the left of $(0.221,0.56)$, the region's bottom right corner. The persistence diagram in \cref{torus_diagram} mirrors the expected theoretical results. A connected component and two 1-dimensional homology features appear in the shaded region, and one long-lived 2-dimensional homology feature also appears in the diagram.
	
	\begin{figure}[h]
		\centering
		\begin{tabular}{|c|c|}
			\hline
			Persistence diagram & 
			\begin{minipage}{0.6\textwidth}
				\includegraphics[width=\linewidth,height=\linewidth,keepaspectratio]{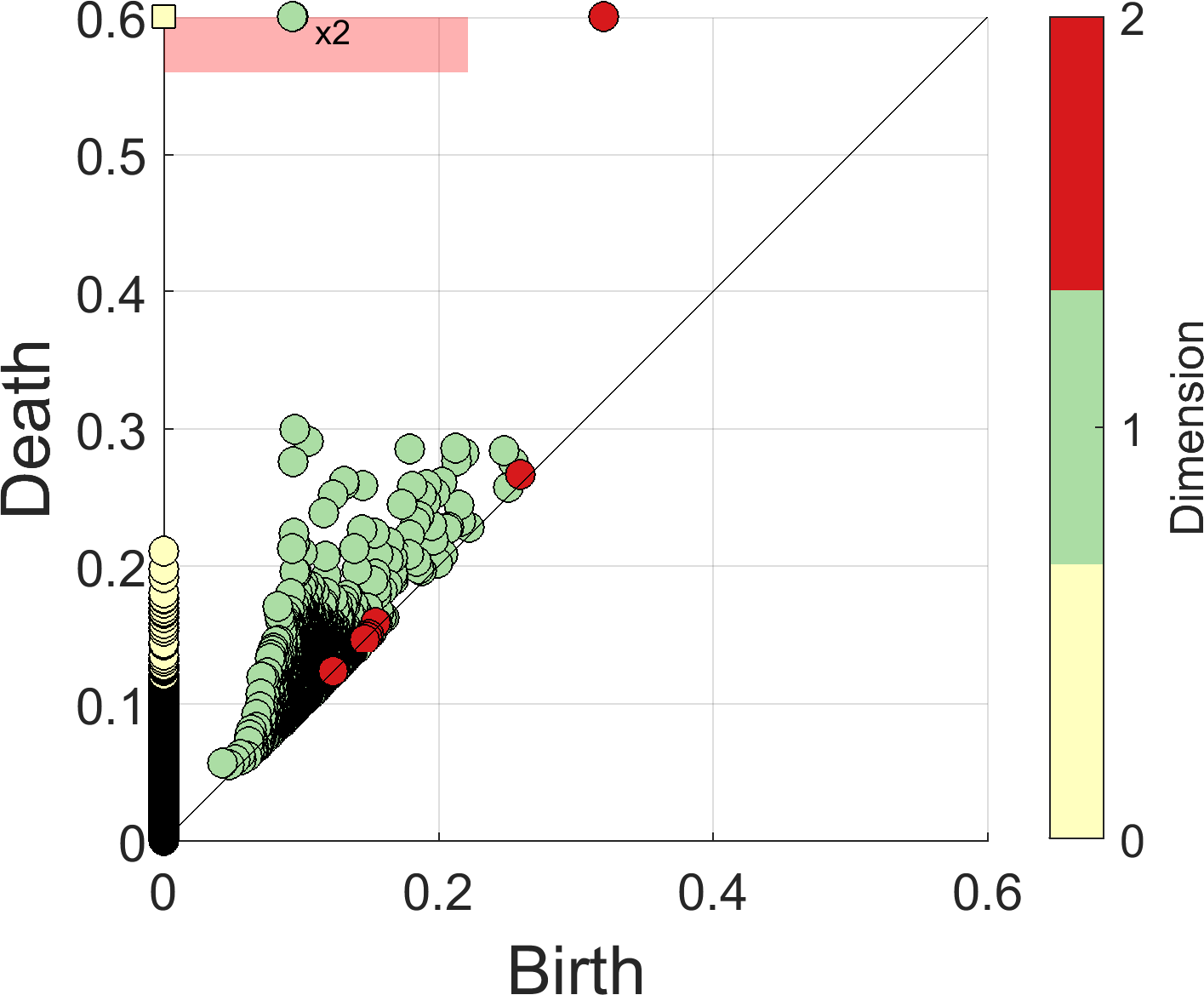} 
			\end{minipage} \\
			\hline 
			Sampling density & $(10^{-7},0.14)$ \\
			\hline 
			Estimated Betti numbers & 
			$\beta_0 = 1,\beta_1 = 2, \beta_2 = 0$ \\
			\hline
		\end{tabular}
		\caption{Persistent homology results dervied from sampling the Clifford torus.  Points in the shaded region of the persistence diagram provably correspond to homology features in the underlying space. \label{torus_diagram}}
	\end{figure}
	
	\subsection{Quartic surfaces}\label{Sec:QuarticSurfaces}
	
	Restricting to the box $[-3,3]\times [-3,3]\times [-3,3]$,
	we next consider the real algebraic varieties 
	\[
	\begin{array}{l}
	V_1 = \mvr(4x^4+7y^4+3z^4-3-8x^3+2x^2y-4x^2-8xy^2-5xy+8x-6y^3+8y^2+4y) 
	\\[0.1in]
	V_2 = \mvr\left(
	\begin{array}{l}
	144x^4+144y^4-225(x^2+y^2)z^2+350x^2y^2\\
	~~~~~~+81z^4+x^3+7x^2y+3x^2+3xy^2-4x-5y^3+5y^2+5y\end{array}\right).
	\end{array}
	\]
	Both quartic equations define pure 2-dimensional varieties.
	\Cref{quartic_sample_fig} displays visualizations of both $V_1$ and $V_2$ using the gathered samples allowing for a qualitative analysis. In particular, $V_1$ appears to be a sphere up to homotopy, with two distinct sphere-like features.

	\begin{figure}[h]
		\centering
		\setlength{\aboverulesep}{0pt}
		\setlength{\belowrulesep}{0pt}
		\begin{tabular}{|c|c|}
			\hline
			$V_1$ & $V_2$ \\
			\includegraphics[width=0.35\textwidth,height=0.35\textwidth,keepaspectratio]{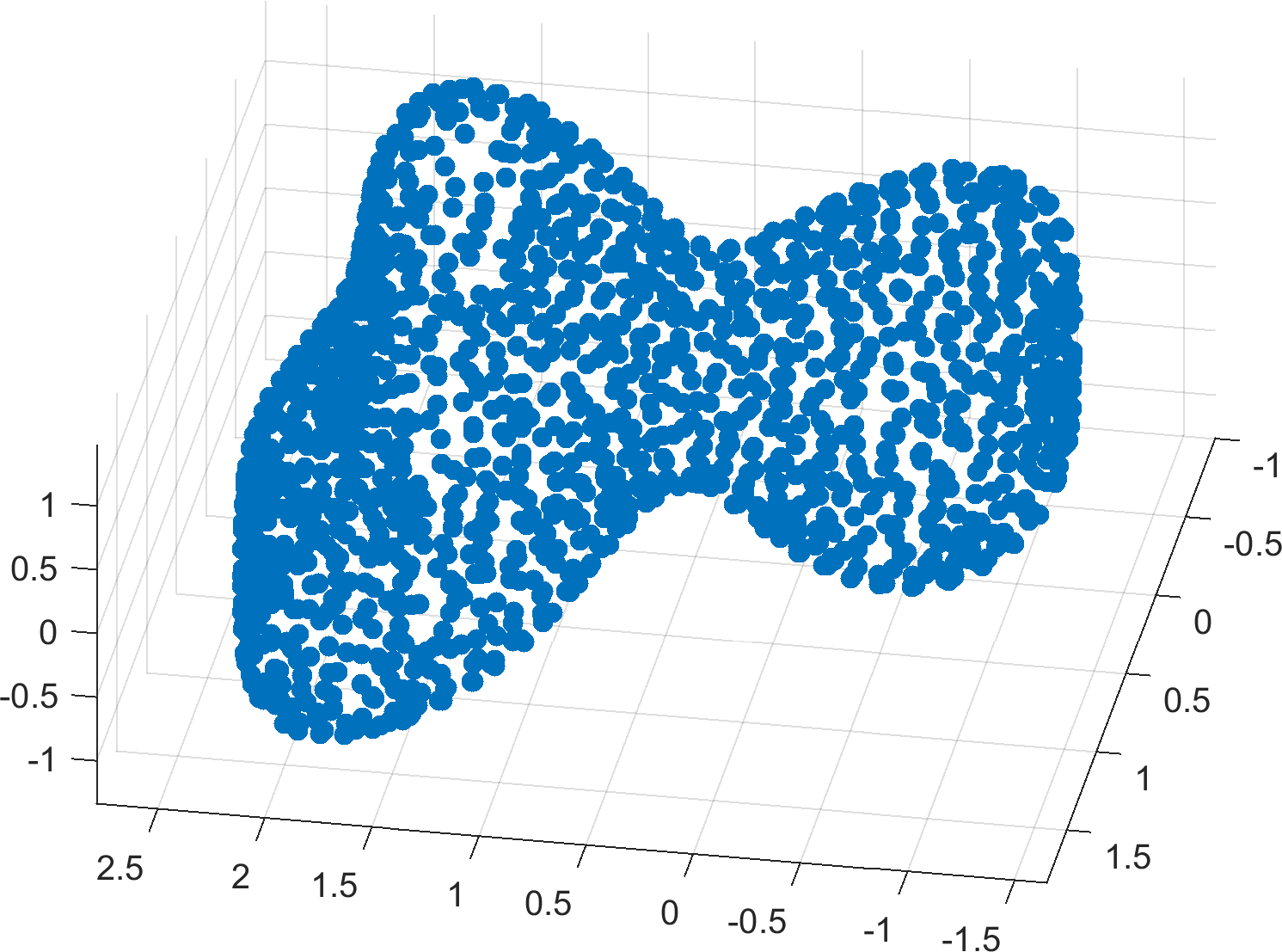} & \includegraphics[width=0.35\textwidth,height=0.35\textwidth,keepaspectratio]{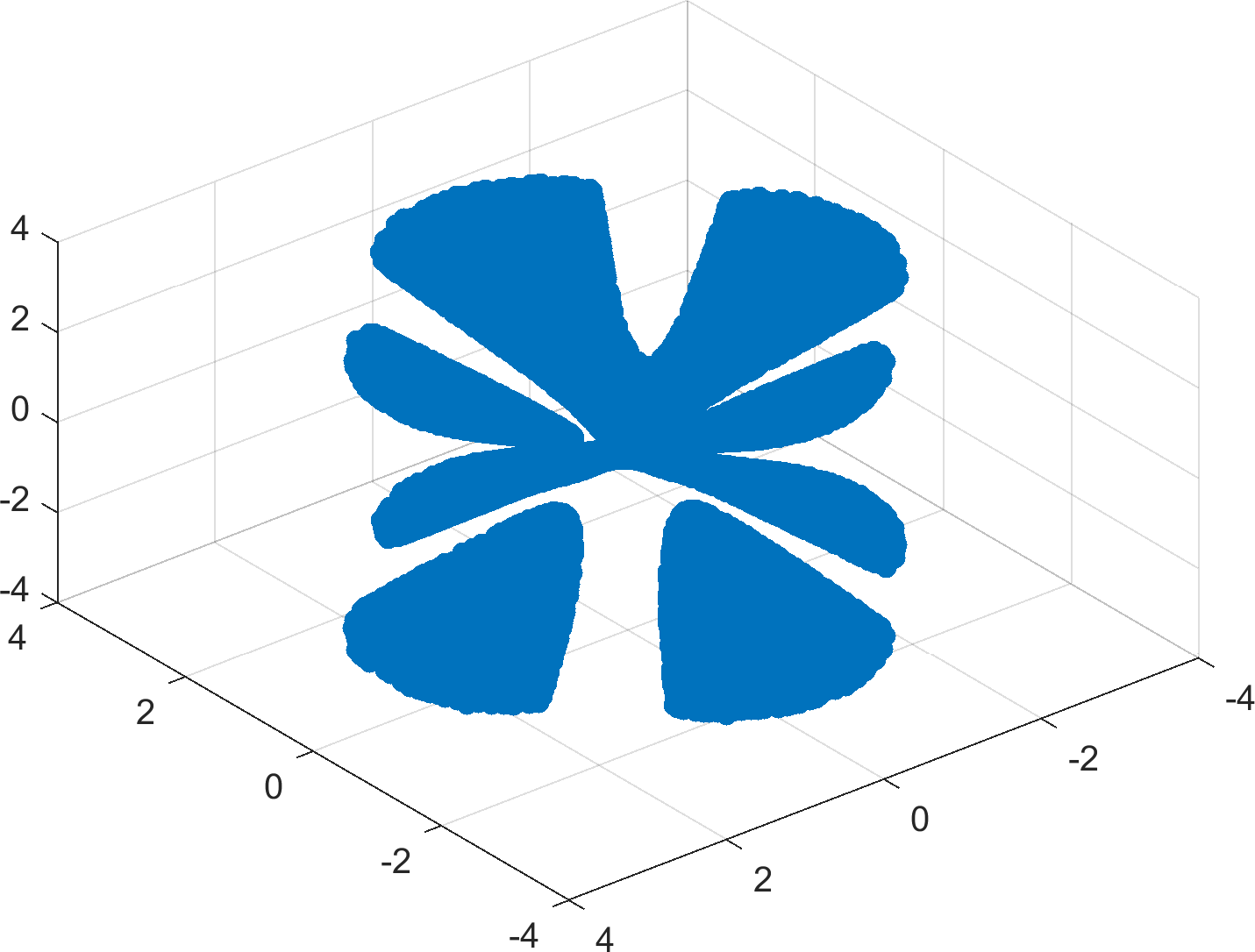} \\
			\hline 
		\end{tabular}
		\caption{Quartic surfaces sampled using \cref{sampalgorithm}. 
			\label{quartic_sample_fig}}
	\end{figure}

	\begin{figure}
		\centering
		\setlength{\aboverulesep}{0pt}
		\setlength{\belowrulesep}{0pt}
		\begin{tabular}{|c|c|}
			\hline
			Variety &
			$V_1$ \\ \hline
			Persistence diagram & 
			\begin{minipage}{0.6\textwidth}
				\includegraphics[width=\linewidth,height=\linewidth,keepaspectratio]{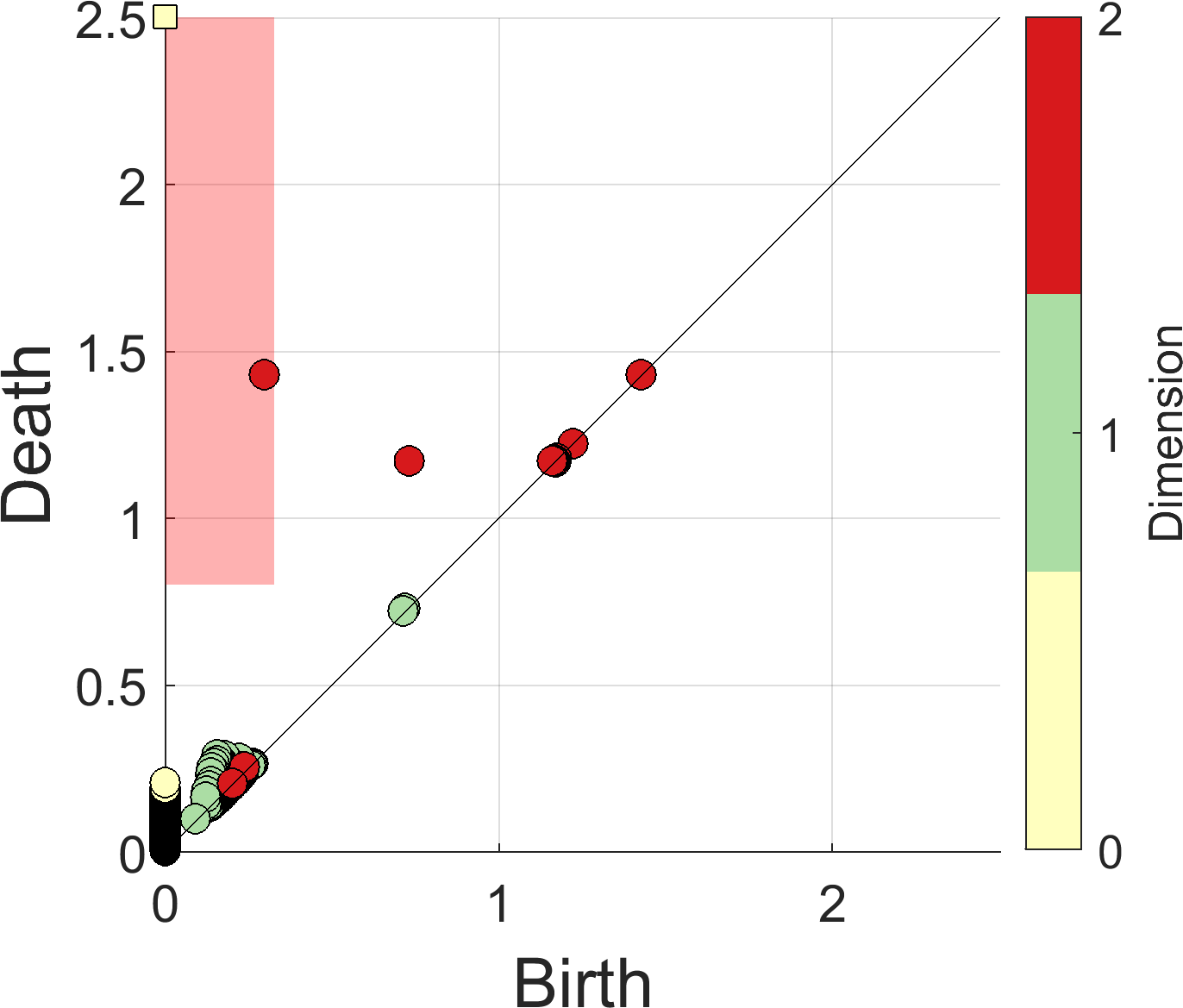} 
			\end{minipage} \\
			\hline 
			Sampling density & $(10^{-7},0.20)$ \\
			\hline
			Estimated Betti numbers & 
			$\beta_0 = 1,\beta_1 = 0, \beta_2 = 1$ 
			\\
			\hline
		\end{tabular}
		\caption{Persistent homology results derived from sampling the variety $V_1$. \label{quartic_diagram_fig1}}
	\end{figure}
	
	\begin{figure}
		\centering
		\setlength{\aboverulesep}{0pt}
		\setlength{\belowrulesep}{0pt}
		\begin{tabular}{|c|c|}
			\hline
			Variety &
			$V_2$ \\ \hline
			Persistence diagram & 
			\begin{minipage}{0.6\textwidth}
				\includegraphics[width=\linewidth,height=\linewidth,keepaspectratio]{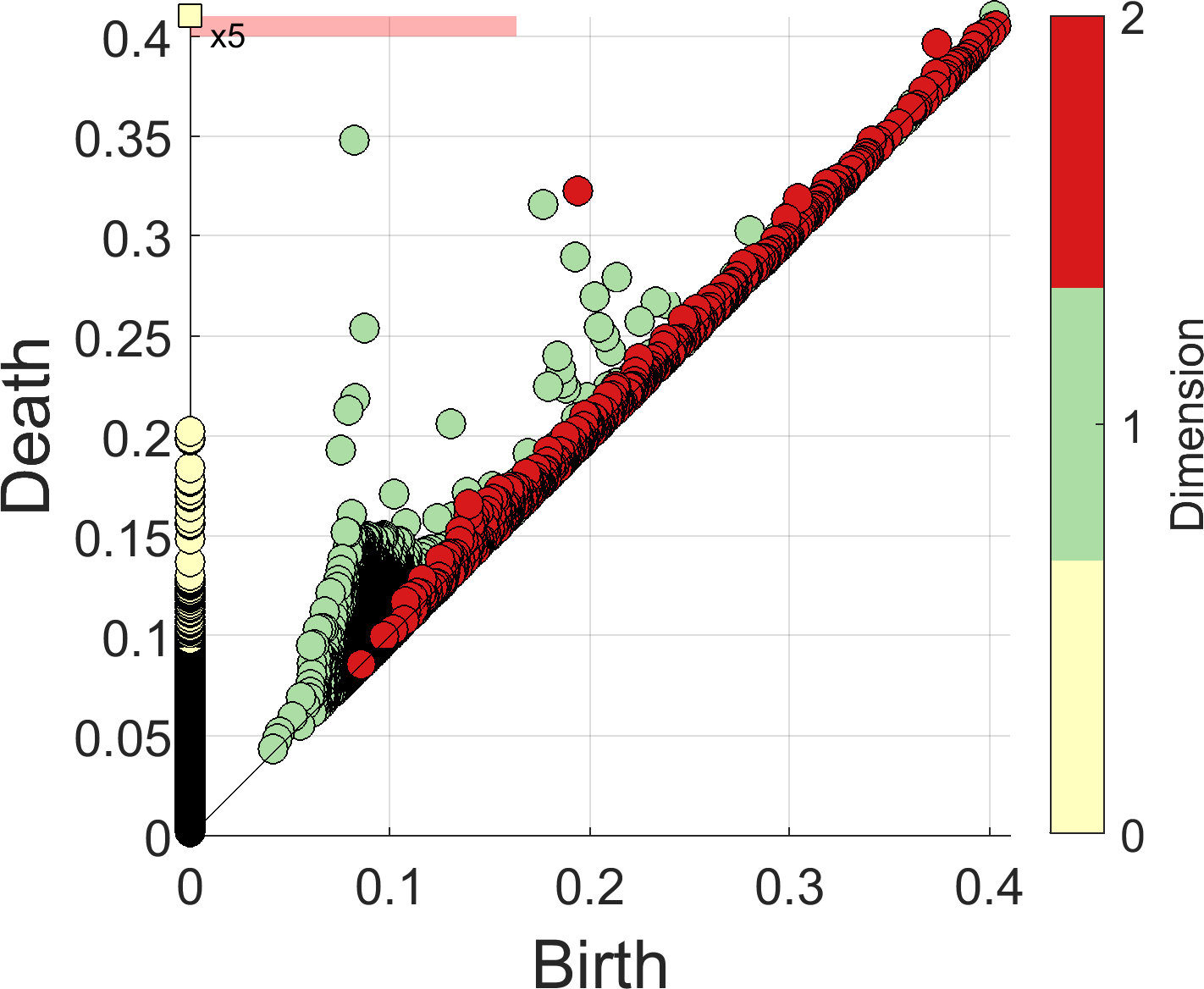} 
			\end{minipage} \\
			\hline 
			Sampling density & $(10^{-7},0.10)$  \\
			\hline
			Estimated Betti numbers & 
			$\beta_0 = 5, \beta_1 = 0, \beta_2 = 0$  
			\\
			\hline
		\end{tabular}
		\caption{Persistent homology results derived from sampling the variety $V_2$. The calculation for $V_2$ has been thresholded to a parameter value of $0.405$. \label{quartic_diagram_fig2}}
	\end{figure}

	Samples produced for $V_1$ and $V_2$ contain 1,511 and 13,904 points respectively. The persistent homology results in \cref{quartic_diagram_fig1} show that $V_1$ has homology features corresponding to a 2-sphere. The persistence diagram for $V_1$ shows how the persistence diagram also captures geometric information about $V_1$ beyond just its Betti numbers. A 2-dimensional point which is relatively far away from the diagonal but not in the shaded region appears in the persistence diagram for $V_1$, and corresponds to the smaller of the two sphere-like features. The only homology features confirmed for $V_2$ in \cref{quartic_diagram_fig2} are 5 connected components.

	\subsection{Deformable pentagonal linkages}\label{Sec:PentLinkage}
	
	For a more elaborate example, we also analyze a kinematics inspired polynomial system. Consider a regular pentagon in the plane consisting of links with unit length, and with one of the links fixed to lie along the $x$-axis with leftmost point at $(0,0)$. The space $V_p$ of all possible configurations of this regular pentagon is a real algebraic variety. Farber and Sch\"{u}tz study this type of configuration space in \cite{farber2007homology}, as well as provide an overview of its study. A specialization of their results shows that $\beta_0$ of $V_p$ is 1, $\beta_1$ is 8, and $\beta_2$ is 1. 
	
	We describe the polynomials defining $V_p$ as explained in \cite{BertiniReal}. Number the links in order around the loop with link 0 as the fixed link, and let $\theta_i$ for $i=0,\dots,4$ be the absolute rotation of the $i$-th link in the plane. Note that $\theta_0 = 0$ because link 0 is fixed, and $\theta_4$ is totally determined by $\theta_i$ for $i=1,2,3$ since one of link~4's endpoints is $(0,0)$ and the other is an endpoint of link 3. 
	A 3-tuple $(\theta_1,\theta_2,\theta_3)$ defines a valid regular pentagon only if the free endpoint of link 3 is distance 1 from $(0,0)$. Letting $s_i = \sin(\theta_i)$ and $c_i = \cos(\theta_i)$ for $i=1,\dots,3$, we have the polynomial condition $(s_1+s_2+s_3)^2 + (1+c_1+c_2+c_3)^2 = 1$. To enforce that $s_i$ and $c_i$ are sine-cosine pairs for $i = 1,\dots,3$, we require the equations $s_i^2 + c_i^2 = 1$. Assembling these constraints together, the configuration space for deformable regular pentagons is modelled by a compact pure 2-dimensional real algebraic variety in the six variables $s_1,s_2,s_3$ and $c_1,c_2,c_3$ with four equations:
	
	\[
	V_p = V_\mr \left(\begin{array}{c}
	s_1^2 + c_1^2  - 1, \\ 
	s_2^2 + c_2^2   - 1, \\ 
	s_3^2 + c_3^2  - 1, \\ 
	(s_1+s_2+s_3)^2 + (1+c_1+c_2+c_3)^2  - 1
	\end{array}\right)\text{.}
	\]
	
	A $(10^{-7},1.12)$ sample of $V_p$ was produced by first obtaining a $(10^{-7},1.0)$ sample using \cref{sampalgorithm}. This sample was then sub-sampled by iteratively choosing a point in the sample, removing all other points within $.12$ of the chosen point, and repeating this loop until all points in the subsample had no other points within distance $.12$. The sample contains 3,548 points, and persistent homology calculations were thresholded to distance value $2.2$. The persistent homology results are summarized in \cref{pentagonal_diagram}. The points far from the diagonal on the left hand side capture the theoretically expected homology for the configuration space. Though 2 features in dimension 2 (voids) appear on the right hand side of the persistent diagram, those features persist for a shorter period of time than the features on the left.

	\begin{figure}[h]
		\centering
		\begin{tabular}{|c|c|}
			\hline
			Persistence diagram & 
			\begin{minipage}{0.6\textwidth}
				\includegraphics[width=\linewidth,height=\linewidth,keepaspectratio]{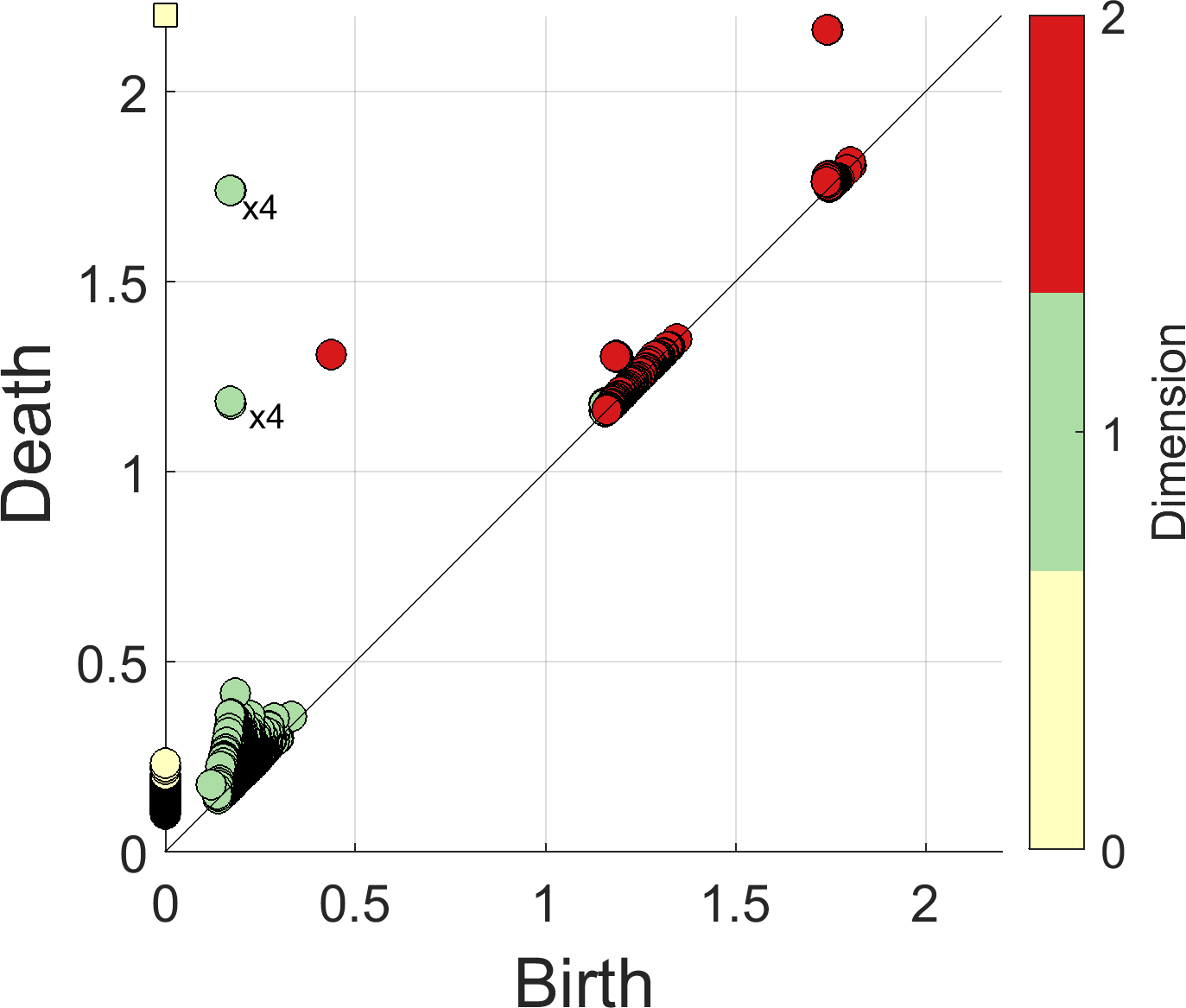} 
			\end{minipage} \\
			\hline 
			Sampling density & $(10^{-7},1.12)$ \\
			\hline 
		\end{tabular}
		\caption{\label{pentagonal_diagram} Persistence diagram computed by sampling the configuration space of deformable pentagonal linkages.}
	\end{figure}


	\section{Conclusion and Future Work}\label{Sec:Conclusion}
	
	The sampling algorithm presented in this paper is a first step towards systematizing the use of the TDA for obtaining geometric and topological information from algebraic varieties, including those that arise in applications. Our use of numerical algebraic geometry methods in the sampling process is unique among sampling approaches, and enables our algorithm to simultaneously satisfy both theoretical and practical constraints for applying TDA. The examples we provide in Section~5 illustrate how using the PH pipeline approach allows for the extraction of detailed information beyond Betti numbers on a real algebraic variety.
	
	A step forward would be to derive and incorporate further information about the geometric structure of singular varieties into systematic TDA based analysis. A real algebraic variety $X$ can be stratified into singular regions: $X\supset X_{0} \supset X_{1}\supset X_{2} \supset \ldots \supset X_t$, where $X_{0}$ is the singular locus of $X$, $X_{1}$ is the singular locus of $X_{0}$, and so on. A classic result of Whitney \cite{WhitneyStructureRealVarieties} shows that this stratification presents $X$ as a stratified manifold. Alternatively, a variety $X$ can also be given an \emph{isosingular stratification} \cite{isosing}, breaking it into strata based on its singularity structure. Running the PH pipeline on individual strata after identifying them via stratification methods for samples (for instance: \cite{WangStratification}) or algebraic methods (detailed in \cite{isosing}) would result in an even more detailed summary of the variety to use for either dimensionality reduction or machine learning. Another direction would be to apply persistent homology of ellipsoids rather than $\epsilon$-balls \cite{breiding2018learning}.
	
	For a variety $X$, our work also raises the natural question of theoretically determining or computationally estimating a lower bound on the weak feature size of $X$. Future work will explore how to exploit the algebraic description of a variety in computing these quantities. Finally, it would be worthwhile to investigate the noise induced from sampling via homotopy continuation in the context of off-set varieties \cite{horobet2018offset}.

	\section*{Acknowledgements}\label{Sec:Ack}
	This paper arises from research done while ED and HAH were funded by the John Fell Oxford University Press (OUP) Research Fund. 
	ED was also funded via a Anne McLaren fellowship from the University of Nottingham.
	PBE thanks Peter Bubenik for helpful template scripts to generate simplicial complex figures and Bernd Sturmfels for the example quartic equations. 
	HAH acknowledges funding from EPSRC EP/K041096/1, EP/R005125/1, EP/R018472/1 and Royal Society University Research Fellowship. 
	JDH was supported in part by NSF CCF 1812746, 
	Army YIP W911NF-15-1-0219, Sloan Research Fellowship BR2014-110 TR14, 
	and ONR N00014-16-1-2722.


	\bibliographystyle{plain}
	\bibliography{TDANAG_references}

\end{document}